\newtheorem{thm}{Theorem}
\newtheorem{lem}{Lemma}
\newcommand{\mc}[1]{\mathcal{#1}}
\newlength{\leftstackrelawd}
\newlength{\leftstackrelbwd}
\def\leftstackrel#1#2{\settowidth{\leftstackrelawd}%
{${{}^{#1}}$}\settowidth{\leftstackrelbwd}{$#2$}%
\addtolength{\leftstackrelawd}{-\leftstackrelbwd}%
\leavevmode\ifthenelse{\lengthtest{\leftstackrelawd>0pt}}%
{\kern-.5\leftstackrelawd}{}\mathrel{\mathop{#2}\limits^{#1}}}
\newtheorem{remark}{Remark}
\journal{Elsevier}
\begin{document}
\begin{frontmatter}
\title{Parametric/Stochastic Model Reduction: Low-Rank Representation, Non-Intrusive Bi-Fidelity Approximation, and Convergence Analysis}
\author[label1]{Jerrad Hampton}
\author[label2]{Hillary Fairbanks}
\author[label3]{Akil Narayan}
\author[label1]{Alireza Doostan\corref{cor1}}
\ead{alireza.doostan@colorado.edu}
\cortext[cor1]{Corresponding Author: Alireza Doostan}

\address[label1]{Smead Aerospace Engineering Sciences Department, University of Colorado, Boulder, CO 80309, USA}
\address[label2]{Department of Applied Mathematics, University of Colorado, Boulder, CO 80309, USA}
\address[label3]{Department of Mathematics, and Scientific Computing and Imaging (SCI) Institute, University of Utah, Salt Lake City, UT 84112, USA}

\begin{abstract}

For practical model-based demands, such as design space exploration and uncertainty quantification (UQ), a high-fidelity model that produces accurate outputs often has high computational cost, while a low-fidelity model with less accurate outputs has low computational cost. It is often possible to construct a bi-fidelity model having accuracy comparable with the high-fidelity model and computational cost comparable with the low-fidelity model. This work presents the construction and analysis of a non-intrusive (i.e., sample-based) bi-fidelity model that relies on the low-rank structure of the map between model parameters/uncertain inputs and the solution of interest, if exists. Specifically, we derive a novel, pragmatic estimate for the error committed by this bi-fidelity model. We show that this error bound can be used to determine if a given pair of low- and high-fidelity models will lead to an accurate bi-fidelity approximation. The cost of this error bound is relatively small and depends on the solution rank. The value of this error estimate is demonstrated using two example problems in the context of UQ, involving linear and non-linear partial differential equations. 

\end{abstract}

\begin{keyword}
Matrix Interpolative Decomposition;  Uncertainty Quantification;  Low-Rank Approximation;  Multi-Fidelity Approximation;  Bi-Fidelity Approximation;  Parametric Model Reduction
\end{keyword}
\end{frontmatter}

\section{Introduction}
\label{sec:intro}

In many practical contexts, an ideal computational method accurately recovers the physical phenomena that it is tasked to model, and does so in a computationally efficient manner that allows repeated calculations of quantities of interest (QoI's) for a large number of different input parameters. When this is possible, the QoI's may be understood across a range of input parameters, or similarly for certain distributions of input parameters. The latter case is connected with the field of parametric uncertainty quantification (UQ), and is fundamental for understanding the propagation of uncertainty through complex models often via the construction of the map between the inputs and QoI's. A growing area of interest in model-based simulations is a fast construction of such a mapping, which also forms our motivation here. For physical systems that are expensive to simulate, such a task may not be straightforward and may provide a significant challenge as it often requires a tradeoff between computational cost and accuracy.

In practice, the model which accurately represents the underlying physics -- the {\it high-fidelity} model -- may be expensive, and many repeated evaluations are likely computationally infeasible. Alternatively, one may consider the use of a {\it low-fidelity} model: This is a model with lower accuracy which is cheaper to evaluate -- relative to the high-fidelity model -- but not necessarily as accurate. The topic of this paper is a {\it bi-fidelity} approach that leverages both models and inherits the strengths of each.

In this work, we assume the existence of a low- and a high-fidelity model that each identify the same QoI in a discrete sense, i.e., as a vector, possibly with different sizes and levels of accuracy. For example, a fine spatial discretizaton of a heat equation, while costly, may lead to accurate temperature predictions, but a coarse spatial discretization gives inaccurate estimates that are generated more quickly. The notion of accuracy here is contextual, both with respect to the metric and the desired quantitative tolerance. Mathematically, we take the high-fidelity model as one that gives an approximation to the {\it truth} to within a desired accuracy and ideally with smallest possible computational cost. It is critical that we assume such a high-fidelity model is still expensive to evaluate, so that methods that would require many simulations from this model are impractical. We also assume that it is (much) cheaper to sample from the low-fidelity model so that compiling a larger number of samples from the low-fidelity model is possible. The goal of a (non-intrusive) bi-fidelity, or more generally multi-fidelity, model is to use relatively few samples from the high-fidelity model and a larger number of samples from the low-fidelity model to generate approximations to the QoI -- at arbitrary input samples --  with accuracies comparable to that of the high-fidelity model.

Early work in multi-fidelity methods for parametric problems has a decidedly statistical flavor. Such work emerged in the field of geosciences with the application of Gaussian process regression -- also known as kriging or co-kriging in the multivariate case -- in a multi-fidelity setting. In \cite{Kennedy00}, an autoregressive scheme is considered to generate a Gaussian process approximation of the output of the most expensive model from nested observations of multiple, less expensive models. In this autoregressive scheme, a given model is approximated by the lower-fidelity model through a multiplicative constant and an additive Gaussian process correction term.  In a related work, \cite{Qian06} creates a Gaussian process surrogate to the low-fidelity model, which is subsequently adjusted via samples of the high-fidelity model. The adjustment is done by training a similar autoregressive model as in \cite{Kennedy00}, consisting in a linear -- in input parameters -- mapping of the low-fidelity surrogate and an additive discrepancy term modeled as a Gaussian process. The resulting high-fidelity surrogate is also a Gaussian process. Extensions of multi-fidelity Gaussian process modeling for applications in optimization are explored in \cite{Forrester07, Laurenceau08}, and recursive methods for improved accuracy or computational complexity are developed in \cite{Kleiber13,LeGratiet14,LeGratiet15,Perdikaris15,Perdikaris16,Parussini17}, among other work. 
 
There has also been significant work over the last several years on reformulating other prominent UQ methods in a multi-fidelity (or multi-level) framework. These methods generally rely on a relatively large number of low-fidelity samples along with a (much) smaller number of high-fidelity samples to build an additive and/or multiplicative correction of the low-fidelity model. The adjusted low-fidelity model will then serve as a multi-fidelity approximation to the high-fidelity model. In particular, in multi-fidelity polynomial chaos expansions (PCEs) \cite{Eldred09, Ng12, Palar15, Padron16}, the high-fidelity solution is estimated by first determining the PCE of the low-fidelity solution, and subsequently the PCE of a correction term is generated from a small set of low-fidelity and high-fidelity samples.  In multi-level Monte Carlo (MC) \cite{Heinrich01,Giles08,Cliffe11}, a sequence of fine-to-coarse level (or grid) models are considered and the expectation of the QoI is estimated via a telescoping summation. These methods apply MC to the coarsest level QoI, and then add the MC estimates of the QoI differences, a.k.a. corrections, over each pair of consecutive grids. Assuming the corrections over finer grids become smaller, fewer realizations of the finer (high-fidelity) grid QoI are needed to compute the correction expectations, thus resulting in a reduced overall cost.

With an aim of constructing a reduced polynomial representation of the high-fidelity solution, the work in \cite{Doostan07,Ghanem07a} builds a small size polynomial basis -- in the random inputs -- identified by a Karhunen-Lo\`eve expansion of the low-fidelity solution. The high-fidelity solution is then approximated via Galerkin projection of the governing equations on the span of the reduced polynomial basis -- as opposed to the standard PCE basis -- resulting in a much smaller size system of equations to solve. The key assumption in this bi-fidelity construction is that the largest eigenvalues of the covariance matrix of the discrete solution decay quickly; that is, the covariance matrix and, therefore, the solution are low-rank. A non-intrusive implementation of this work is found in \cite{Kumar16}.

Of most relevance to the approach of this study, \cite{Narayan14b, Zhu14} considers the low-rank approximation of the discretized QoI $\bm v \in \R^M$,
\begin{equation}
\label{eq:low_rank}
\bm{v}(\bm \mu)\approx \sum_{\ell=1}^{r}\bm{v}(\bm \mu^{(\ell)})c_\ell(\bm\mu).
\end{equation}
The approximation above is of the same type constructed in so-called {\it reduced basis} representations in the (parametric) model reduction literature~\cite{Almroth78,Noor1980,Peterson89, Maday02,Rozza2008}. However, the procedure considered in \cite{Narayan14b, Zhu14} and here is disparate from the algorithmic strategies in the reduced basis community. In (\ref{eq:low_rank}), $\bm v$ at an arbitrary parametric input $\bm\mu\in\mathbb{R}^{d}$ is represented in a basis consisting of $r\ll M$ realizations of $\bm v$ -- corresponding to selected input samples $\bm \mu^{(\ell)}$ -- via the coefficients $c_\ell(\bm \mu)$. The work in \cite{Narayan14b, Zhu14} relies on a greedy procedure applied to low-fidelity realizations of $\bm v$ to identify the set of $\bm \mu^{(\ell)}$. The coefficients $c_\ell(\bs{\mu})$ are computed via an interpolatory condition on the low-fidelity model. 
The same interpolation rule, i.e., the same coefficients $c_\ell(\bm\mu)$, is then used with the high-fidelity counterpart of the reduced basis snapshots and this results in the bi-fidelity construction (\ref{eq:low_rank}). In this manner, the high-fidelity model is evaluated only at a (small) number of input samples that is equal to the size $r$ of the reduced basis. Unlike in standard reduced basis techniques this approach does not perform Galerkin (or Pertov-Galerkin) projections of the high-fidelity equations, thus allowing the use of legacy codes in a black-box fashion.

With a different algorithm to construct the reduced basis and the interpolation rule, the work in \cite{Doostan16} and \cite{Skinner17} examines a similar bi-fidelity approximation on problems involving heat transfer in a ribbed channel and prediction of pressure coefficient over a family of NACA airfoils, respectively. The strategy in \cite{Doostan16} was used in \cite{Fairbanks2016} to build control variates within a multi-level MC framework. 

\subsection{Contribution of This Work}

While in practice the above multi-fidelity methods have shown promising results, there is a lack of convenient tools to verify the convergence of the multi-fidelity solution and understand the role of key factors affecting the convergence. These tools must also enable a practitioner to determine, {\it a priori} and with relatively small cost, whether or not a given low-fidelity model will result in accurate multi-fidelity approximations. To this end, our primary contribution is the derivation of an error bound that is appropriate for methods as in~\cite{Narayan14b, Zhu14,Doostan16,Fairbanks2016,Skinner17} for bi-fidelity approximation, with a particular emphasis on the role of the rank of the $\bm v$ ensemble in the approximation error. This bound is derived specifically for the bi-fidelity procedure in \cite{Doostan16,Fairbanks2016,Skinner17} and relies on quantities that are easily estimated with a number of high-fidelity samples that scales favorably with the rank of the $\bm v$ ensemble. As a result, the associated error estimate is efficient, easy to compute, and reliably conservative, making it desirable in practical contexts. This method is also presented in its full generality while applied to two examples that demonstrate the potential for practical utility of this error estimate. 

The remainder of this manuscript is organized into three main sections. Section~\ref{sec:detail} reviews the mathematical notation and procedure for the bi-fidelity approximation. Section~\ref{sec:theorem} presents our main result, which focuses on bounding the bi-fidelity approximation error. To show the accuracy of this bound, Section~\ref{sec:examples} provides two numerical experiments and illustrates the utility of the error estimate. Finally, Section \ref{sec:conc} gives a short summary of the conclusions of this study. In~\ref{sec:theory}, we provide the proof of the main error bound introduced in Section \ref{sec:theorem}. 

\section{Method Detail}
\label{sec:detail}

As in \cite{Narayan14b, Zhu14,Doostan16,Fairbanks2016,Skinner17}, we consider a bi-fidelity construction of a QoI admitting a low-rank representation of the form (\ref{eq:low_rank}). We presume here that these QoI's are functions defined over the spatial or temporal domain of the problem or can be computed from such functions. While the QoI's could contain the solution over the entire domain of the problem, e.g., temperature over the physical domain, this is often unnecessary and a reduction to some summary of the solution, e.g., temperature along a boundary, can explain the behavior of interest. In a broad sense, this low-rank heuristic implies the existence of a limited number of behaviors for $\bm v$ to exhibit, and that for any particular set of inputs $\bm\mu$, the corresponding $\bm v$ is nearly a linear reconstruction of $\bm v$ exhibiting these different traits. In this way, even $\bm v$ for possibly highly non-linear problems may be approximated, with some tolerable error, by a linear combination of relatively few basis vectors. This low-rank assumption is actually more than a heuristic: It can be codified via the mathematical notion of $n$-widths \cite{pinkus_n-widths_1985}, and QoI ensembles for differential equations can indeed exhibit this low-rank property \cite{bachmayr_kolmogorov_2017}.

This formulation raises two primary concerns: How do we identify the basis vectors, and how do we identify the appropriate linear combinations of these basis vectors? For computational feasibility, we require solutions to both these concerns that require neither a large number of high-fidelity realizations of $\bm v$, nor modification to the simulation codes.

To present our approach for addressing these concerns, and without loss of generality, we consider the approximation of a collection of $N$ high-fidelity realizations of the QoI, instead of individual realizations as in (\ref{eq:low_rank}). We denote the $k$th realization of the QoI obtained via the high- and low-fidelity models by $\bm v^{(k)}_H\in\mathbb{R}^M$ and $\bm v^{(k)}_L\in\mathbb{R}^m$, respectively. These vectors may be of different sizes, but depend on the same parametric input sample $\bm\mu^{(k)}$. For the examples presented in Section \ref{sec:examples}, we consider $\bs{\mu}$ as a random variable, and $\bm{v}_H^{(k)}$ and $\bm{v}_L^{(k)}$ are drawn in an MC fashion from the joint probability distribution on $\bm\mu$. When no probabilistic information on $\bm\mu$ is available, $\bm{v}_H^{(k)}$ and $\bm{v}_L^{(k)}$ may be generated from uniformly distributed samples of $\bm\mu$. 

Our analysis relies on arranging $\{\bm{v}^{(k)}_L\}$ and $\{\bm{v}^{(k)}_H\}$ into matrices, denoted $\bm{H} \in \R^{M \times N}$ and $\bm{L}\in \R^{m \times N}$, to refer to high-fidelity and low-fidelity data, respectively, where the corresponding columns of $\bm{H}$ and $\bm{L}$ are the QoI realizations at the same input samples. That is, $\bm{H}$ and $\bm{L}$ have the same number of columns, $N$, but generally have different numbers of rows $M$ and $m$, respectively. The low-rank assumption on the $\bm v_H$ ensemble implies that $\bm H$ admits a low-rank factorization. We assume the same for $\bm L$ but do not require the same rank for $\bm H$ and $\bm L$.  

\begin{remark}
For the interest of presentation and analysis we refer to the full high-fidelity matrix $\bm H$. However, in practice, we require having access to only $r\ll N$ ($r$ being the rank of $\bm v_H$ as in (\ref{eq:low_rank})) high-fidelity realizations to generate this bi-fidelity approximation; see Section \ref{sec:lifting}. To evaluate the bound on the resulting approximation error, we require a number of high-fidelity realizations that is slightly larger than $r$, as illustrated in the examples of Section \ref{sec:examples}. 
\end{remark}

\subsection{Low-Rank Factorization of Low-Fidelity Data}
\label{sec:rrqr}

The method used to identify a reduced basis and an interpolation rule for $\bm H$ (or $\bm v_H$) relies on identifying the same for the low-fidelity data $\bm L$. For this purpose, we form the rank $r$ {\it interpolative decomposition} of $\bm{L}$~ \cite{Cheng05,Martinsson11,Halko2011} using the rank-revealing QR factorization~\cite{GolubAndVanLoan}. The revealed rank of this factorization is our reduced rank $r$, and this provides the rank-$r$ reduced decomposition
\begin{align}
\label{eqn:QR_main}
\bm{L}\bm{P} &\approx  \bm{Q}_L\bm{R}_L,
\end{align}
for an $N\times N$ permutation matrix $\bm{P}$. Then $\bm{Q}_L$ has $r$ orthonormal columns, and $\bm{R}_L$ has $r$ rows and is upper triangular. A column partitioning of $\bm{R}_L$ into an $r\times r$ upper triangular matrix $\bm{R}_L^{(11)}$ and an $r \times (N-r)$ matrix $\bm{R}_L^{(12)}$, and replacing $\bm{R}_L^{(12)}$ with the approximation $\bm{R}_L^{(12)} \approx \bm{R}_L^{(11)} \bm Z$ for some $\bs{Z}$ leads to 
\begin{align}
\label{eqn:QR_partition}
\bm{L}\bm{P} &\approx  \bm{Q}_L\bm{R}_L^{(11)}\left[ \bm I\ |\ \bm{Z} \right],
\end{align}
where $\bm I$ is the $r\times r$ identity matrix. When $\bm{R}_L^{(11)}$ is ill-conditioned, \cite{Cheng05} suggests a solution $\bm{Z}$ with minimum $\Vert \bm Z\Vert_{F}$, where $\Vert \cdot\Vert_{F}$ denotes the Frobenius norm. 
The rank $r$ factorization (\ref{eqn:QR_partition}) can be rewritten as 
\begin{align}
\bm{L} & \approx  \bm{L}(r)\left[ \bm I\ |\ \bm{Z} \right]\bm{P}^T; \nonumber\\
	   \label{eqn:QR_interpolative}
           & = \bm{L}(r)\bm{C}_L;\\
           \label{eq:lo_fi_est}
           & \coloneqq \hat{\bm{L}},
\end{align}
where $\bm{L}(r) = \bm{Q}_L\bm{R}_L^{(11)}$ -- referred to as the {\it column skeleton} of $\bm L$ \cite{Cheng05} -- contains the first $r$ columns of $\bm{L} \bm{P}$ and $\bm{C}_{L}\coloneqq[\ \bm{I}\ |\ \bm{Z}\ ]\bm{P}^{T}$. In words, (\ref{eqn:QR_interpolative}) identifies a set of $r$ columns of $\bm L$, i.e., a reduced basis for $\bm L$, along with a coefficient matrix $\bm C_L$ specifying the linear combination of the basis vectors to approximate all columns in $\bm{L}$. Hence the name {\it interpolative decomposition}. If we identify the entries of $\bs{C_L}$ with the coefficients $c_{\ell}(\bs{\mu})$ in \eqref{eq:low_rank}, then (\ref{eqn:QR_interpolative}) is the matrix formulation of (\ref{eq:low_rank}) for $N$ low-fidelity realizations $\{\bm v_L^{(k)}\}$. 

For the sake of completeness, we next report some properties of the factorization (\ref{eqn:QR_interpolative}) from \cite{Martinsson11}, which we will use later. For detailed analyses of the interpolative decomposition, the interested reader is referred to \cite{Cheng05,Martinsson11,Halko2011}.

\begin{lem} (Lemma 3.1 of \cite{Martinsson11}.) Let $\Vert \cdot \Vert$ denote the matrix induced $\ell_2$ norm. For any positive integer $r\le\min\{m,N\}$, 
\begin{enumerate}
\item $\Vert \bm{C}_L \Vert\le \sqrt{r(N-r)+1}$,
\item $\bm{L} = \hat{\bm{L}}$, when $r=m$ or $r=N$, and
\item $\Vert \bm{L} - \hat{\bm{L}} \Vert\le\sqrt{r(N-r)+1}\ \sigma_{r+1}$ when $r<\min\{m,N\}$, where $\sigma_{r+1}$ is the $(r + 1)$st greatest singular value of $\bm{L} $.
\end{enumerate} 
\label{lem:martinsson}
\end{lem}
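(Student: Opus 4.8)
The plan is to treat the three assertions separately; the first two are immediate and the third carries all the content. For the second assertion, when $r=N$ the factorization~(\ref{eqn:QR_main}) is just a thin QR of $\bm{L}\bm{P}$ with no truncation, and when $r=m$ the factor $\bm{Q}_L$ is square and orthogonal, so again~(\ref{eqn:QR_main}) is exact; in either case the revealed rank being $r$ forces $\bm{R}_L^{(11)}$ to be invertible, so $\bm{R}_L^{(12)}=\bm{R}_L^{(11)}\bm{Z}$ can be taken as an exact identity (the block $\bm{R}_L^{(12)}$ being absent when $r=N$), and~(\ref{eqn:QR_interpolative}) holds with equality, i.e. $\hat{\bm{L}}=\bm{L}$. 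For the first assertion, since $\bm{C}_L=[\,\bm{I}\ |\ \bm{Z}\,]\bm{P}^T$ and right multiplication by the orthogonal matrix $\bm{P}^T$ preserves the spectral norm, $\Vert\bm{C}_L\Vert=\Vert[\,\bm{I}\ |\ \bm{Z}\,]\Vert$; from $[\,\bm{I}\ |\ \bm{Z}\,][\,\bm{I}\ |\ \bm{Z}\,]^T=\bm{I}+\bm{Z}\bm{Z}^T$ one reads off $\Vert\bm{C}_L\Vert^2=1+\Vert\bm{Z}\Vert^2\le 1+\Vert\bm{Z}\Vert_F^2$, and since $\bm{Z}$ is $r\times(N-r)$ with entries of modulus at most one (a property of the pivoting used to form the interpolative decomposition), $\Vert\bm{Z}\Vert_F^2\le r(N-r)$, which gives the bound in assertion~1.

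For the third assertion I would argue as follows. Write the pivoted columns as $\bm{L}\bm{P}=[\,\bm{L}(r)\ |\ \bm{B}\,]$ with $\bm{B}$ the trailing $N-r$ columns, and extend~(\ref{eqn:QR_main}) to a full QR, $\bm{L}\bm{P}=[\,\bm{Q}_L\ |\ \bm{Q}_2\,]\left[\begin{smallmatrix}\bm{R}_L^{(11)} & \bm{R}_L^{(12)}\\ \bm{0} & \bm{R}_{22}\end{smallmatrix}\right]$, so that $\bm{L}(r)=\bm{Q}_L\bm{R}_L^{(11)}$ and $\bm{B}=\bm{Q}_L\bm{R}_L^{(12)}+\bm{Q}_2\bm{R}_{22}$. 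By~(\ref{eqn:QR_interpolative}), $\hat{\bm{L}}\bm{P}=\bm{L}(r)[\,\bm{I}\ |\ \bm{Z}\,]=[\,\bm{L}(r)\ |\ \bm{L}(r)\bm{Z}\,]$, so the leading $r$ columns of $\bm{L}\bm{P}-\hat{\bm{L}}\bm{P}$ vanish while the trailing block equals $\bm{B}-\bm{L}(r)\bm{Z}=\bm{Q}_L\bigl(\bm{R}_L^{(12)}-\bm{R}_L^{(11)}\bm{Z}\bigr)+\bm{Q}_2\bm{R}_{22}=\bm{Q}_2\bm{R}_{22}$, using $\bm{R}_L^{(12)}=\bm{R}_L^{(11)}\bm{Z}$. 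Since permutations are orthogonal and $\bm{Q}_2$ has orthonormal columns, both leave the spectral norm unchanged, so
\[
\Vert\bm{L}-\hat{\bm{L}}\Vert=\bigl\Vert\,[\,\bm{0}\ |\ \bm{Q}_2\bm{R}_{22}\,]\,\bigr\Vert=\Vert\bm{R}_{22}\Vert .
\]
It then remains to invoke the defining estimate of the rank-revealing QR: $\Vert\bm{R}_{22}\Vert\le\sqrt{r(N-r)+1}\;\sigma_{r+1}(\bm{L}\bm{P})$, and $\sigma_{r+1}(\bm{L}\bm{P})=\sigma_{r+1}$ because $\bm{P}$ is orthogonal. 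This is assertion~3.

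The only non-mechanical ingredient, and hence the main obstacle, is the single fact used twice above: that the column pivoting can be arranged so that simultaneously every entry of $\bm{Z}=(\bm{R}_L^{(11)})^{-1}\bm{R}_L^{(12)}$ has modulus at most one and the trailing block satisfies $\Vert\bm{R}_{22}\Vert\le\sqrt{r(N-r)+1}\,\sigma_{r+1}$. This is precisely the existence of a strong rank-revealing factorization in the sense of Gu--Eisenstat with bounding parameter equal to one; it is the genuine content behind the lemma, and I would cite it from \cite{Cheng05,Martinsson11} rather than reprove it. One small loose end is the ill-conditioned case flagged in Section~\ref{sec:rrqr}, where $\bm{Z}$ is instead the minimum-Frobenius-norm solution and $\bm{R}_L^{(12)}=\bm{R}_L^{(11)}\bm{Z}$ holds only approximately; there one would either restrict to the well-conditioned regime (as the cited references effectively do) or verify that the discarded residual is itself controlled by $\sigma_{r+1}$.
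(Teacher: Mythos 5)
The paper offers no proof of this lemma at all --- it is quoted verbatim as Lemma 3.1 of \cite{Martinsson11} and used as an imported fact. Your derivation is correct and is essentially the standard argument behind that cited result: all three assertions reduce, as you correctly isolate, to the existence of a Gu--Eisenstat-type strong rank-revealing factorization with the entries of $\bm{Z}$ bounded by one and $\Vert\bm{R}_{22}\Vert\le\sqrt{r(N-r)+1}\,\sigma_{r+1}$, which is exactly what \cite{Cheng05,Martinsson11} establish, so citing rather than reproving that ingredient is appropriate here.
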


\begin{remark}
In practice, the approximation rank $r$ is not known {\it a priori} and thus the pivoted Gram-Schmidt step involved in the QR factorization (\ref{eqn:QR_main}) is continued with larger ranks until $\Vert \bm{L}- \hat{\bm{L}}\Vert$ reaches a specified accuracy.
\end{remark}

\begin{remark}
Although not investigated here, other factorizations such as those in~\cite{Mahoney09, Elhamifar09, Dyer15,perry2017} are equally applicable to produce an interpolative decomposition of $\bm L$.
\end{remark}

\subsection{High-Fidelity Approximation via Basis Update and Low-Fidelity Interpolation} 
\label{sec:lifting}

In addition to a rank $r$ factorization of $\bm L$, the permutation matrix $\bs{P}$ in the interpolative decomposition (\ref{eqn:QR_interpolative}) identifies the set of $r$ input samples $\bm\mu^{(k)}$ corresponding to the basis vectors in $\bm{L}(r)$. This identification is one primary reason for employing an interpolative decomposition instead of the more widely-used and $\ell^2$-optimal singular value decomposition (SVD) of $\bm L$. As in \cite{Narayan14b, Zhu14, Doostan16,Fairbanks2016,Skinner17}, our approach to approximate $\bm{H}$ is to replace $\bm{L}(r)$ with $\bm{H}(r)$, a matrix of vectors from the high-fidelity model corresponding to the same input samples $\bm\mu^{(k)}$ and with the same arrangement as in $\bm{L}(r)$. Stated differently, we consider the high-fidelity counterpart of $\bm{L}(r)$ as the reduced basis for $\bm H$ (or $\bm v_H^{(k)}$). In this case we set the bi-fidelity, rank $r$ approximation of $\bm H$, denoted by $\hat{\bm{H}}$, to
\begin{align}
\label{eq:hi_fi_est}
\hat{\bm{H}} &\coloneqq \bm{H}(r)\bm{C}_L,
\end{align}
where $\bm{C}_L$ is the coefficient matrix computed from the low-fidelity approximation (\ref{eqn:QR_interpolative}). In other words, the interpolation rule learned from the low-fidelity model is applied to approximate the high-fidelity realizations.  In the case that we have a construction of this form, where the difference between $\hat{\bm{L}}$ and $\hat{\bm{H}}$ is limited to the changes only in the basis vectors associated with the low- and high- fidelity models, we say that $\hat{\bm{L}}$ is \textit{lifted} to the approximation $\hat{\bm{H}}$.  For this paper we say that $\hat{\bm{L}}$ and $\hat{\bm{H}}$ are \textit{corresponding estimates} (of $\bs{L}$ and $\bs{H}$, respectively). Our analysis presented next applies to any such corresponding estimates.
 
\begin{remark}[Utility of $\hat{\bm H}$ for UQ]
While outside the scope of the present study, the bi-fidelity realizations $\hat{\bm v}_H^{(k)}$, i.e., the columns of $\hat{\bm{H}}$, may be used in place of $\bm v_H^{(k)}$ in methods such as MC simulation or its variants, sparse grid stochastic collocation \cite{Xiu05a}, non-intrusive PCE \cite{Hosder06,Doostan11a,Constantine12a}, etc., to approximate the statistics of $\bm v_H$ or perform sensitivity analysis. The accuracy of such estimates, however, depends on the accuracy of the bi-fidelity estimates $\hat{\bm v}_H^{(k)}$.
\end{remark}

\subsection{Convergence Analysis and Its Practical Application}
\label{sec:theorem}

We are now prepared to state the main result that bounds the bi-fidelity error, $\|\hat{\bm{H}}-\bm{H}\|$, where $\|\cdot \|$ is matrix induced $\ell_2$ norm. However, we first present a high-level discussion on our analysis approach. At the core of our convergence results is the assumption that there exists a matrix $\bm{T}$ with {\it bounded} $\Vert \bm{T}\Vert$  such that for a matrix $\bm{E}$ with {\it small} $\Vert \bm{E}\Vert$, representing the error in the approximation,
\begin{align}
\label{eqn:core_assumption}
\bm{H} = \bm{T}\bm{L} + \bm{E}.
\end{align}
The matrix $\bs{T}$ is essentially a lifting operator from low-fidelity discretization space to high-fidelity discretization space. Our theory outlined in~\ref{sec:theory} computes this matrix explicitly, but does not require any knowledge about the discretizations or assumptions between the low- and high-fidelity models. We emphasize that $\bs{T}$ is only needed for theoretical analysis and our error bounds do not require its construction.

It is straightforward to verify that $\|\hat{\bm{H}}-\bm{H}\|$ can be bounded as  
\begin{align}
\nonumber\|\bm{H}-\hat{\bm{H}}\| &\le \|\bm{H} - \bm{TL}\| + \|\bm{TL}-\bm{T}\hat{\bm{L}}\|+ \|\bm{T}\hat{\bm{L}}-\hat{\bm{H}}\|;\\
 \nonumber&\le \|\bm{H} - \bm{TL}\| + \|\bm{T}\|\|\bm{L}-\hat{\bm{L}}\| + \|\bm{TL}-\bm{H}\|\|\bm{C}_L\|;\\
\label{eq:the_one_bound_pre}&\le \left(1 + \|\bm{C}_L\|\right)\|\bm{E}\| + \|\bm{T}\|\|\bm{L}-\hat{\bm{L}}\|.
\end{align}
When $\Vert \bm{T}\Vert$ is bounded and $\Vert \bm{E}\Vert$ is small, (\ref{eq:the_one_bound_pre}) suggests an accurate bi-fidelity estimate as $\|\bm{C}_L\|$ is bounded, and $\|\bm{L}-\hat{\bm{L}}\|$ is small given that $\bm{L}$ is low-rank; see Lemma \ref{lem:martinsson}. In general, there is no guarantee of the existence of a mapping $\bm T$ of $\bm L$ to $\bm H$ that gives an $\bm E$ with small norm. We therefore seek to establish a condition on $\bm L$  (in relation to $\bm H$) to ensure the existence of such $\bm{T}$ and  $\bm{E}$ matrices.

For a finite $\tau\ge 0$, define 
\begin{align}
\label{eqn:eps_tau_def}
\epsilon(\tau) &\coloneqq \lambda_{\max}(\bm{H}^T\bm{H}-\tau \bm{L}^T\bm{L}),
\end{align}
where $\lambda_{\max}(\cdot)$ denotes the largest eigenvalue of a matrix, and $\bm{H}^T\bm{H}$ and $\bm{L}^T\bm{L}$ are the Gramians of the high- and low-fidelity matrices, respectively. We describe in~\ref{sec:theory} that if $\epsilon(\tau)$ is small enough then matrices $\bs{T}$ and $\bs{E}$ with our desired properties can be constructed.

For a fixed $\tau$, $\epsilon(\tau)$ has various interpretations. It is the smallest $\epsilon$ such that
\begin{align}
\label{eqn:eps_condition_rewrote}
\epsilon\bm{I} + \tau\bm{L}^T\bm{L} - \bm{H}^T\bm{H}
\end{align}
is a positive semi-definite matrix. Second, $\epsilon(\tau)$ has a geometrical interpretation that demonstrates its significance in this context of bi-fidelity approximation. 
The above equation implies that $\epsilon(\tau)$ is the smallest value such that for any $\bm{x}\in\mathbb{R}^N$,
\begin{align*}
\|\bm{Hx}\|^2&\le \tau\|\bm{Lx}\|^2 + \epsilon(\tau)\|\bm{x}\|^2.
\end{align*}
This guarantees that for every $\bm{x}$, $\sqrt{\tau}\bm{Lx}$ is further from the origin than $\bm{Hx}$, to within a margin governed by $\epsilon(\tau)$ and $\|\bm{x}\|$. This ensures that every realized $\bm H\bm x$ can be reached by rotating $\bm L\bm x$ through the application of a scaling related to $\tau$, and adding a small correction which is bounded based on $\epsilon(\tau)$ and $\|\bm x\|$. The relationship between these interpretations is further discussed in~\ref{sec:theory}, and we note this rotation, scaling, and additive correction is essentially utilized to prove the following theorem completing the bound in (\ref{eq:the_one_bound_pre}).

\begin{thm}
\label{thm:algorithm}
For any $\tau\ge 0$, let $\epsilon(\tau)$ be as in (\ref{eqn:eps_tau_def}). Let $\hat{\bm{H}}$ and $\hat{\bm{L}}$ be corresponding estimates of rank $r$ with coefficients $\bm{C}_L$, and let $\sigma_k$ denote the $k$th largest singular value of $\bm{L}$. Then,
\begin{subequations}
\label{eqn:main_bound}
\begin{small}
\begin{align}\label{eqn:main_bound_rho}
\|\bm{H}-\hat{\bm{H}}\| &\le \mathop{\min}\limits_{\tau,\, k\le\textnormal{rank}(\bm{L})}
\rho_k(\tau) \\
\label{eq:error_func}
\rho_k(\tau) &\coloneqq \left[\left(1 + \|\bm{C}_L\|\right)\sqrt{\tau\sigma^2_{k+1} + \epsilon(\tau)} + \|\bm{L}-\hat{\bm{L}}\|\sqrt{\tau + \epsilon(\tau)\sigma_k^{-2}}\right],
\end{align}
\end{small}
\end{subequations}
When $k = \mathrm{rank}(\bs{L})$, we set $\sigma_{k+1} = 0$.
\end{thm}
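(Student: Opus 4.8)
The plan is to combine the elementary triangle-inequality bound \eqref{eq:the_one_bound_pre} with a judicious construction of the lifting operator $\bm{T}$ and the residual $\bm{E}$ in \eqref{eqn:core_assumption}, where the construction is parametrized by $\tau$ and the truncation level $k$. First I would fix $\tau \ge 0$ and a rank $k \le \mathrm{rank}(\bm{L})$, and use the thin SVD $\bm{L} = \bm{U}\bm{\Sigma}\bm{V}^T$ to split the low-fidelity column space into the dominant $k$-dimensional part and its complement. The idea is to define $\bm{T}$ so that it inverts (up to the $\sqrt{\tau}$ scaling) the action of $\bm{L}$ on its top-$k$ right singular directions and annihilates the rest: roughly $\bm{T} = \sqrt{\tau}\,\bm{W}\bm{\Sigma}_k^{-1}\bm{U}_k^T$ for an appropriate orthonormal-columned $\bm{W}$ chosen to align $\sqrt{\tau}\bm{L}$ with $\bm{H}$ on those directions (the "rotation" alluded to after \eqref{eqn:eps_condition_rewrote}). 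Then $\bm{E} = \bm{H} - \bm{T}\bm{L}$ has two contributions: the part of $\bm{H}$ acting on the discarded singular directions of $\bm{L}$, whose size is controlled by $\|\bm{H}\bm{x}\|$ for $\bm{x}$ in the bottom singular subspace, and a rotational mismatch term on the top-$k$ directions. The semidefinite inequality $\|\bm{H}\bm{x}\|^2 \le \tau\|\bm{L}\bm{x}\|^2 + \epsilon(\tau)\|\bm{x}\|^2$ is exactly what bounds both: on the bottom subspace $\|\bm{L}\bm{x}\| \le \sigma_{k+1}\|\bm{x}\|$ so $\|\bm{H}\bm{x}\|^2 \le (\tau\sigma_{k+1}^2 + \epsilon(\tau))\|\bm{x}\|^2$, giving $\|\bm{E}\| \le \sqrt{\tau\sigma_{k+1}^2 + \epsilon(\tau)}$, and on the top subspace the same inequality forces the rotation to close the gap with residual bounded by the same quantity.

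Second, I would bound $\|\bm{T}\|$. From the construction $\|\bm{T}\| = \sqrt{\tau}/\sigma_k$ on the dominant subspace (and zero elsewhere), so $\|\bm{T}\| \le \sqrt{\tau}\,\sigma_k^{-1}$. Feeding $\|\bm{E}\| \le \sqrt{\tau\sigma_{k+1}^2 + \epsilon(\tau)}$ and $\|\bm{T}\| \le \sqrt{\tau}\,\sigma_k^{-1}$ into \eqref{eq:the_one_bound_pre} yields
\begin{align*}
\|\bm{H} - \hat{\bm{H}}\| \le (1 + \|\bm{C}_L\|)\sqrt{\tau\sigma_{k+1}^2 + \epsilon(\tau)} + \sqrt{\tau}\,\sigma_k^{-1}\,\|\bm{L} - \hat{\bm{L}}\|.
\end{align*}
To match the stated $\rho_k(\tau)$ I need to absorb the extra $\sqrt{\epsilon(\tau)\sigma_k^{-2}}$ appearing under the second radical; this suggests the lifting operator should be taken slightly differently, namely with $\bm{T}$ also carrying an $O(\sqrt{\epsilon(\tau)})$ perturbation on the complementary subspace so that $\|\bm{T}\|^2 \le \tau\sigma_k^{-2} + \epsilon(\tau)\sigma_k^{-4}$ or, more cleanly, so that $\|\bm{T}\| \le \sqrt{\tau + \epsilon(\tau)\sigma_k^{-2}}/\sigma_k$ — I would reverse-engineer the exact splitting of $\bm{H}$ (not just its action, but writing $\bm{H} = \bm{T}\bm{L} + \bm{E}$ with the "scaling related to $\tau$" interpreted as scaling $\bm{L}\bm{x}$ by $\sqrt{\tau + \epsilon(\tau)\|\bm{x}\|^2/\|\bm{L}\bm{x}\|^2}$ direction-wise) so that the constants land precisely on $\rho_k(\tau)$. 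Finally, since $\tau$ and $k$ were arbitrary, taking the minimum over all admissible pairs gives \eqref{eqn:main_bound_rho}.

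The main obstacle I anticipate is making the construction of $\bm{T}$ (and the orthonormal "rotation" factor $\bm{W}$) rigorous: one must show that on the top-$k$ right singular subspace of $\bm{L}$ there genuinely exists a linear map sending $\sqrt{\tau}\bm{L}\bm{x}$ to $\bm{H}\bm{x}$ with controlled residual, which is a matrix-dilation / Parrott-type argument rather than a one-line estimate, and the $\sigma_k^{-2}$ factor in the second term of $\rho_k(\tau)$ indicates the construction is not the naive pseudoinverse but a regularized version whose norm and residual must be tracked simultaneously. The edge cases are comparatively routine: when $k = \mathrm{rank}(\bm{L})$, setting $\sigma_{k+1} = 0$ is consistent because $\bm{L}$ acts invertibly on its row space; the relevant bound $\|\bm{H}\bm{x}\|^2 \le \tau\|\bm{L}\bm{x}\|^2 + \epsilon(\tau)\|\bm{x}\|^2$ is immediate from the definition \eqref{eqn:eps_tau_def} of $\epsilon(\tau)$ as $\lambda_{\max}$, and $\|\bm{C}_L\|$ and $\|\bm{L} - \hat{\bm{L}}\|$ are carried along symbolically without needing Lemma~\ref{lem:martinsson} until one wants the fully explicit, computable form of the bound.
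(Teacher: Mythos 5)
Your overall skeleton---plug bounds on $\|\bm{E}\|$ and $\|\bm{T}\|$ into \eqref{eq:the_one_bound_pre}, build $\bm{T}$ from the top-$k$ right singular subspace $\mc{V}_k$ of $\bm{L}$, control everything with the quadratic-form inequality $\|\bm{H}\bm{x}\|^2\le\tau\|\bm{L}\bm{x}\|^2+\epsilon(\tau)\|\bm{x}\|^2$, and minimize over $\tau$ and $k$---is exactly the paper's. But the core construction you propose does not work, and you have correctly sensed this without resolving it. The inequality defining $\epsilon(\tau)$ is one-sided: it upper-bounds $\|\bm{H}\bm{x}\|$ in terms of $\sqrt{\tau}\|\bm{L}\bm{x}\|$ but gives no lower bound, so there is in general no isometric ``rotation'' $\bm{W}$ sending $\sqrt{\tau}\bm{L}\bm{x}$ close to $\bm{H}\bm{x}$ on $\mc{V}_k$. (Take $\bm{H}\bm{x}=\bm{0}$ for some unit $\bm{x}\in\mc{V}_k$: your $\bm{T}=\sqrt{\tau}\,\bm{W}\bm{\Sigma}_k^{-1}\bm{U}_k^T$ then commits an error of size $\sqrt{\tau}$ in that direction, which is not dominated by $\sqrt{\tau\sigma_{k+1}^2+\epsilon(\tau)}$.) Consequently your claimed $\|\bm{E}\|$ bound fails on the top-$k$ subspace, your $\|\bm{T}\|\le\sqrt{\tau}\,\sigma_k^{-1}$ does not produce the second radical of $\rho_k(\tau)$, and the ``reverse-engineering'' / Parrott-dilation step you defer is precisely where the proof lives.

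The missing idea is much simpler than a dilation argument: take $\bm{T}=\bm{H}\bm{P}_{\mc{V}_k}\bm{L}^+$ as in \eqref{eq:tr_def}. This $\bm{T}$ reproduces $\bm{H}$ \emph{exactly} on $\mc{V}_k$ (no rotation is needed, since $\bm{L}^+\bm{L}$ is the orthogonal projection onto $\mc{N}(\bm{L})^\perp\supset\mc{V}_k$), so the residual collapses to $\bm{E}=\bm{H}\bm{P}_{\mc{V}_k^\perp}$; applying the $\epsilon(\tau)$ inequality to $\bm{P}_{\mc{V}_k^\perp}\bm{x}$ and using $\|\bm{L}\bm{P}_{\mc{V}_k^\perp}\bm{x}\|\le\sigma_{k+1}\|\bm{x}\|$ gives $\|\bm{E}\|^2\le\tau\sigma_{k+1}^2+\epsilon(\tau)$. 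For the operator norm, this $\bm{T}$ annihilates $\mc{R}(\bm{L})^\perp$ and satisfies $\bm{T}\bm{L}\bm{y}=\bm{H}\bm{y}$ for $\bm{y}\in\mc{V}_k$, so $\|\bm{T}\|^2=\max\{\|\bm{H}\bm{y}\|^2:\bm{y}\in\mc{V}_k,\ \|\bm{L}\bm{y}\|\le 1\}\le\tau+\epsilon(\tau)\|\bm{y}\|^2\le\tau+\epsilon(\tau)\sigma_k^{-2}$, using $\|\bm{y}\|\le\sigma_k^{-1}\|\bm{L}\bm{y}\|$ on $\mc{V}_k$. That yields both radicals in $\rho_k(\tau)$ in two lines each; it is the plain (projected) pseudoinverse, not a regularized one. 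Everything else in your outline---the triangle inequality \eqref{eq:the_one_bound_pre}, the $k=\mathrm{rank}(\bm{L})$ edge case, deferring Lemma~\ref{lem:martinsson}---is fine.
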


The proof of this theorem is given in~\ref{sec:theory}; here we provide insight and implications of its result. 

An actual evaluation of the bound requires estimating $\tau$ and $\epsilon(\tau)$, which can be achieved by minimizing the bound over a range of $k$ and $\tau$; see Algorithm \ref{alg:bound}. A \naive{} attempt to estimate $\epsilon(\tau)$ from \eqref{eqn:eps_tau_def} seems to require the entire matrix $\bs{H}$. However, we propose in Section \ref{sec:error_estimate} a procedure that uses only a relatively small number of high- and low-fidelity realizations. Notice that aside from $\epsilon(\tau)$, the bound (\ref{eqn:main_bound}) has no dependence on the high-fidelity model. Let
\begin{align}
\label{eq:first_term_bd}
B_1 = \left(1 + \|\bm{C}_L\|\right)\sqrt{\tau\sigma^2_{k+1} + \epsilon(\tau)};\\
\label{eq:second_term_bd}
B_2 = \|\bm{L}-\hat{\bm{L}}\|\sqrt{\tau + \epsilon(\tau)\sigma_k^{-2}}.
\end{align}
The terms (\ref{eq:first_term_bd}) and (\ref{eq:second_term_bd}) may be small: Observe that $B_1$ is small when $\tau\sigma^2_{k+1}\ll 1$ and the optimal $\epsilon(\tau)$ is small. Recall $\Vert \bm{C}_L \Vert\le \sqrt{r(N-r)+1}$ from Lemma \ref{lem:martinsson}. When, for a given $k$,  $\sigma_{k+1}$ is small, i.e., the singular values of $\bm L$ decay quickly, the magnitude of $B_1$ is governed by the optimal $\epsilon(\tau)$. To see that $B_2$ is small, note that if the low-rank reconstruction is accurate for the low-fidelity model, then $\Vert \bm{L} - \hat{\bm{L}} \Vert\le\sqrt{r(N-r)+1}\ \sigma_{r+1}$ is small following Lemma \ref{lem:martinsson}. Given this, $B_2$ is also small when $\sigma_{k+1}/\sigma_{k}\ll 1$, $\tau\sigma^2_{k+1}\ll 1$, and the optimal $\epsilon(\tau)$ is small. We note that the requirements $\sigma_{k+1}/\sigma_{k}\ll 1$ and $\tau\sigma^2_{k+1}\ll 1$ highlight the significance of the low-rank assumption on $\bm L$ in the success of this bi-fidelity construction. 

Using Lemma \ref{lem:martinsson}, the bound (\ref{eqn:main_bound}) can be simplified to
%
\begin{align}
\|\bm{H}-\hat{\bm{H}}\| &\le \mathop{\min}\limits_{\tau,\, k\le\textnormal{rank}(\bm{L})}\left[\sqrt{r(N-r)+1}\ \left(1+\frac{\sigma_{k+1}}{\sigma_{k}}\right)\sqrt{\tau\sigma^2_{k} + \epsilon(\tau)}\right].\nonumber
\end{align}
In our numerical experiments, we use the sharper error estimate in (\ref{eqn:main_bound}). 

We now address the computation of an optimal $\epsilon(\tau)$, or, more precisely an optimal pair $(\tau,\epsilon(\tau))$. Specifically we consider an estimate $\hat{\epsilon}(\tau)$ with relatively few evaluations of the high-fidelity model. Such an estimate allows us to approximate the bound (\ref{eqn:main_bound}) on $\|\bm{H}-\hat{\bm{H}}\|$, which measures the quality of the bi-fidelity approximation.

\subsection{Estimating $\epsilon(\tau)$ Using Limited High-Fidelity Data}
\label{sec:error_estimate}

The definition of $\epsilon (\tau)$ in (\ref{eqn:eps_tau_def}) depends on full realizations of the high-fidelity model which is untenable in many practical situations and its direct computation would significantly detract from the utility of the bi-fidelity error estimate. Consider Gramian matrices and an estimate $\hat{\epsilon}$ defined, for a normalizing constant $c$, by
\begin{align}
\bm{G}_H &\coloneqq \bm{H}^T\bm{H};\\
\bm{G}_L &\coloneqq \bm{L}^T\bm{L};\\
\label{eq:eps_tau_hat_def}
\hat{\epsilon}(\tau) &\coloneqq c\lambda_{\max}(\hat{\bm{G}}_H - \tau \hat{\bm{G}}_L),
\end{align}
for estimates $\hat{\bm{G}}_H$ and $\hat{\bm{G}}_L$ of $\bm{G}_H$ and $\bm{G}_L$, respectively. Recall that the number of columns of $\bm{H}$ and $\bm{L}$ is $N$. Let $n$ denote a small number of columns that are sub-sampled from $\bm{H}$ and $\bm{L}$, and $\hat{\bm{G}}_H$ and $\hat{\bm{G}}_L$ are the Gramian matrices associated with these $n$ columns
We set $c=N/n$ as the normalizing constant throughout the remainder of this work. With these Gramian estimates, we can construct $\hat{\epsilon}(\tau)$ for any $\tau$ using (\ref{eq:eps_tau_hat_def}). To approximate the bound in (\ref{eqn:main_bound}), we may replace $\epsilon (\tau)$ with $\hat{\epsilon}(\tau)$, and then identify the minimum achieved value over a range of $k$ and $\tau$ values, noting that the search over $k$ requires only additionally knowing the appropriate singular values of $\bm L$. Our numerical results in Section \ref{sec:examples} show empirically that values of $n$ slightly larger than $r$ are sufficient to estimate the optimal pair $(\tau, \epsilon(\tau))$. 

Algorithm \ref{alg:bound} summarizes our proposed approach to evaluate the bound (\ref{eqn:main_bound}) as an estimate for the bi-fidelity error $\|\bm{H}-\hat{\bm{H}}\|$. We emphasize that Algorithm \ref{alg:bound} uses $\hat{\epsilon}(\tau)$ from (\ref{eq:eps_tau_hat_def}).  

\vspace{.3cm}
\begin{algorithm}[H]
\caption{Algorithm for estimating the error bound (\ref{eqn:main_bound}).} 
\label{alg:bound}
\begin{algorithmic}[1]
\STATE Initialize a vector of values for $\tau\ge 0$.
\STATE Use $n$ high- and low-fidelity realizations to set the Gramian\\ matrices $\hat{\bm{G}}_L$ and $\hat{\bm{G}}_H$ as explained in Section \ref{sec:error_estimate}.  
\STATE For all values of $\tau$ evaluate the corresponding $\hat{\epsilon}(\tau)$ using (\ref{eq:eps_tau_hat_def}).
\STATE For each $k\in\{0,\cdots, \text{rank}(\bm L)\}$ evaluate (\ref{eqn:main_bound}) for all pairs $(\tau,\hat{\epsilon}(\tau))$ computed in Step 3 above.
\STATE Choose the minimum value achieved by (\ref{eqn:main_bound}) over all $k$ and $(\tau,\hat{\epsilon}(\tau))$\\ as an estimate for $\|\bm{H}-\hat{\bm{H}}\|$.
\end{algorithmic}
\end{algorithm}

\section{Numerical Examples}
\label{sec:examples}

To investigate various aspects of the proposed bi-fidelity approximation and the associated error estimate, we consider two practically motivated problems.

\subsection{Test Case 1: Heat Driven Cavity Flow}
\label{subsec:cavity}
A practical case for consideration comes from temperature-driven fluid flow in a cavity~\cite{LeMaitre02b, LeMaitre10, LeQuere91, Peng14, Hampton14, Hampton15, Fairbanks2016, Hampton17}, where the QoI is the steady state heat flux along the hot wall; see Figure~\ref{fig:cavity_example}. The left vertical wall has a random temperature $T_h$, referred to as the hot wall, while the right vertical wall, referred to as the cold wall, has a spatially varying stochastic temperature $T_c<T_h$ with constant mean $\bar{T}_c$. Both horizontal walls are adiabatic. The reference temperature and the reference temperature difference are defined as $T_{ref}=\bar{T}_c$ and $\Delta T_{ref}=T_h-\bar{T}_c$, respectively. Under small temperature difference assumption, i.e., the Boussinesq approximation, the normalized governing equations are given by, \cite{LeMaitre02b},
\begin{equation}
\label{eqn:cavity}
\begin{aligned}
&\frac{\partial \bm{u}}{\partial t} + \bm{u}\cdot\nabla\bm{u}=-\nabla p + \frac{\text{Pr}}{\sqrt{\text{Ra}}}\nabla^2\bm{u}+\text{Pr}\Theta\bm{e}_y,\\
& \nabla\cdot\bm{u}=0,\\ 
&\frac{\partial \Theta}{\partial t}+ \nabla\cdot(\bm{u}\Theta)=\frac{1}{\sqrt{\text{Ra}}}\nabla^2\Theta,
\end{aligned}
\end{equation}
where $\bm{e}_y$ is the unit vector $(0,1)$, $\bm{u}=(u,v)$ is velocity vector field, $\Theta=(T- T_{ref})/\Delta T_{ref}$ is normalized temperature, $p$ is pressure, and $t$ is time. Zero velocity boundary conditions on all walls (in both directions) are assumed. For more details on the normalization of the variables in (\ref{eqn:cavity}), we refer the interested reader to \cite{LeQuere91,LeMaitre02b}. Prandtl and Rayleigh numbers are defined, respectively, as $\text{Pr}=\nu/\alpha$ and $\text{Ra}={g}\tau\Delta T_{ref}{L}^3/({\nu}{\alpha})$. Specifically, $L$ is the length of the cavity, ${g}$ is gravitational acceleration, $\nu$ is kinematic viscosity, $\alpha$ is thermal diffusivity, and the coefficient of thermal expansion is given by $\tau$. In this example, we set $g=10$, $L=1$, $\tau = 0.5$, and $\text{Pr}=0.71$. We use a  finite volume method for the discretization of (\ref{eqn:cavity}). 

\subsubsection{Sources of Uncertainty}
\label{subsubsec:Sto_BC}

On the cold wall, a temperature distribution with stochastic fluctuations is applied as
\begin{equation}
  T(x=1,y)=\bar{T}_c+T'(y),
\label{eqn:coldwall}
\end{equation}
where $\bar{T}_c = 100$ is a constant expected temperature. The fluctuation $T'(y)$ is given by the truncated Karhunen-Lo\`eve-like expansion
\begin{equation}
\label{eq:T'}
T'(y) = \sigma_T\sum_{i=1}^{d_T}\sqrt{\lambda_i}\varphi_i(y)\mu_i,
\end{equation}
where $d_T=50$ and $\sigma_T=2$. Here, each $\mu_i$ is assumed to be an independent and identically distributed uniform random variable on $[-1,1]$, and $\{\lambda_i\}_{i=1}^{d_T}$ and $\{\phi_i(y)\}_{i=1}^{d_T}$ are the $d_T$ largest eigenvalues and the corresponding eigenfunctions of the exponential covariance kernel 
\begin{equation}
C_{TT}(y_1,y_2) = \exp{\left(-\frac{\vert y_1-y_2\vert}{l_T}\right)},
\end{equation}
with correlation length $l_T=0.15$. An example of the cold boundary condition is shown in Figure~\ref{fig:cavity_example}. The temperature on the hot wall $T_h$ is also assumed to be random and uniformly distributed over $[105, 109]$. Finally, we consider the viscosity $\nu$ to be uniformly distributed over $[0.004,0.01]$. In total, the dimension of the random input $\bm\mu$ is $52$. The QoI, heat flux along the hot wall, is represented by a vector of heat flux values at the grid points (along the hot wall) corresponding the high- or low-fidelity meshes.

\begin{figure}[H]
\centering
\includegraphics[width = 0.47\textwidth]{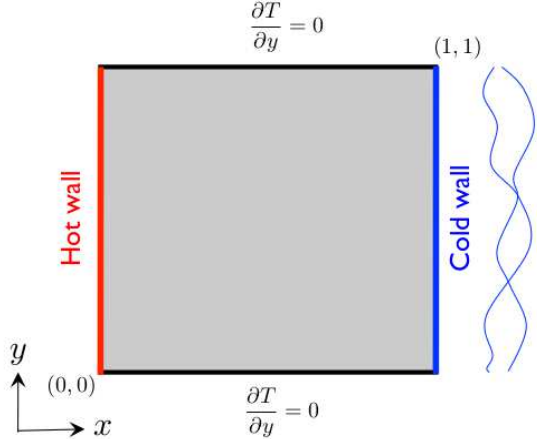}
\caption{Schematic of the heat driven cavity flow problem, reproduced from Figure 5 of~\cite{Fairbanks2016}.}
\label{fig:cavity_example}
\end{figure}
\subsubsection{Bi-Fidelity Approximation and Error Bound Estimate}
\label{subsubsec:sto_baspec}

To compute realizations of the QoI, we use finite volume discretizations of various resolutions to solve (\ref{eqn:cavity}), where the highest resolution, a 256 $\times$ 256 grid point mesh, is used as the high-fidelity model, while relatively coarse meshes of 128 $\times$ 128, 64 $\times$ 64, 32 $\times$ 32, and 16 $\times$ 16 grid points are used for the corresponding low-fidelity model. All meshes are spatially uniform. We consider bi-fidelity approximations of various ranks $r$, corresponding to the number of basis vectors in the column skeleton matrices $\bm{H}(r)$ and $\bm{L}(r)$ introduced in Section~\ref{sec:theorem}. We also consider bounds derived from computations of optimal pair $(\tau, \hat{\epsilon}(\tau))$ as explained in Section~\ref{sec:error_estimate}. These estimates are drawn from $n$ randomly generated high-fidelity samples, i.e., $n$ randomly selected columns of $\bm H$, and we consider estimates computed from various sample sizes $n$. 

Figure~\ref{fig:bi_fi_realization_heatflux} displays realizations of heat flux along the hot wall for a random sample of $\bm\mu$, obtained by the high-fidelity, low-fidelity, and rank $r=10$ bi-fidelity models. We observe the close agreement between the bi- and high-fidelity solutions even when the 16 $\times$ 16 mesh is used as the low-fidelity model. Figure~\ref{fig:cavity_hists} provides four histograms of the low- and bi-fidelity error to gauge the performance of all 100 QoI realizations. For (a)-(d), which use increasingly refined meshes for the low-fidelity model, we see an improvement of the bi-fidelity performance over that of the associated low-fidelity model. 

We next consider the error bound efficacy of using  $\hat{\epsilon}(\tau)$ from (\ref{eq:eps_tau_hat_def}), in place of $\epsilon(\tau)$ from (\ref{eqn:eps_tau_def}), in Theorem~\ref{thm:algorithm}. This efficacy is defined as the ratio of the error estimate over the actual error in the approximation of $\bm H$ so that an efficacy of $1$ implies the error estimate is exact, and an efficacy greater than $1$ implies that the computed error estimate is in fact a bound. Figure~\ref{fig:cavity_err_eff} shows this error efficacy for various low-fidelity models, approximation ranks $r$, and sample sizes $n$. For each pair $(n,r)$ the reported error ratio is the average of $30$ error ratios, each computed from independent sets of $n$ high-fidelity samples. We note that the error efficacy is greater than 1 whenever the sample size $n$ is greater than the approximation rank $r$, and that, as long as this condition holds, the error efficacy is less than 10, implying that the error bound is off by less than one order of magnitude. This is of great utility in determining either a sample size $n$ for a given approximation rank $r$ or, conversely, determining an admissible rank of approximation for a given sample size for which the computed error estimate is still valid.

\begin{figure}[H]
\centering
\subfloat[16 $\times$ 16 Low-Fidelity Mesh]{\includegraphics[width = 0.49\textwidth]{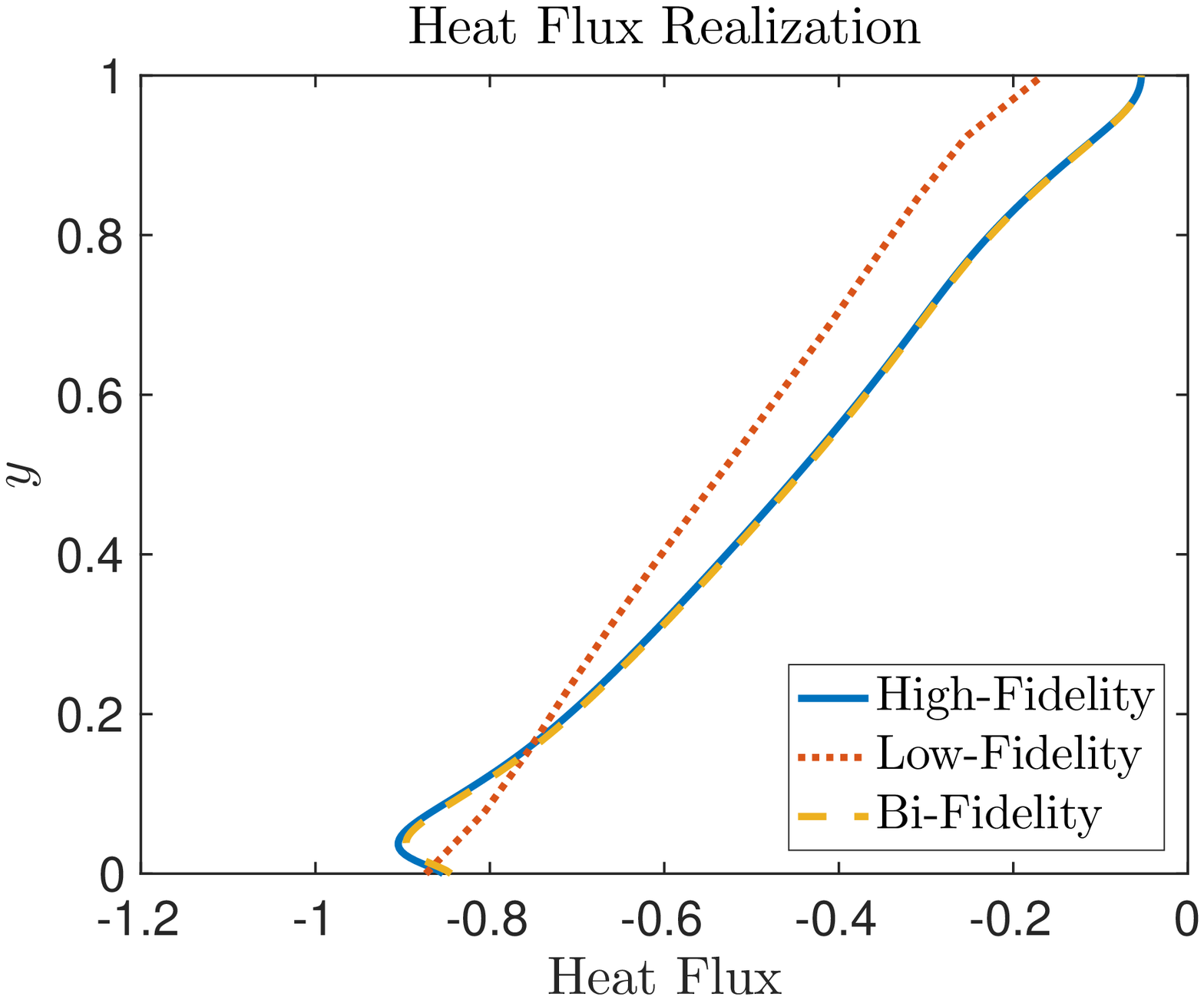}}\hspace{1mm}
\subfloat[32 $\times$ 32 Low-Fidelity Mesh]{\includegraphics[width = 0.49\textwidth]{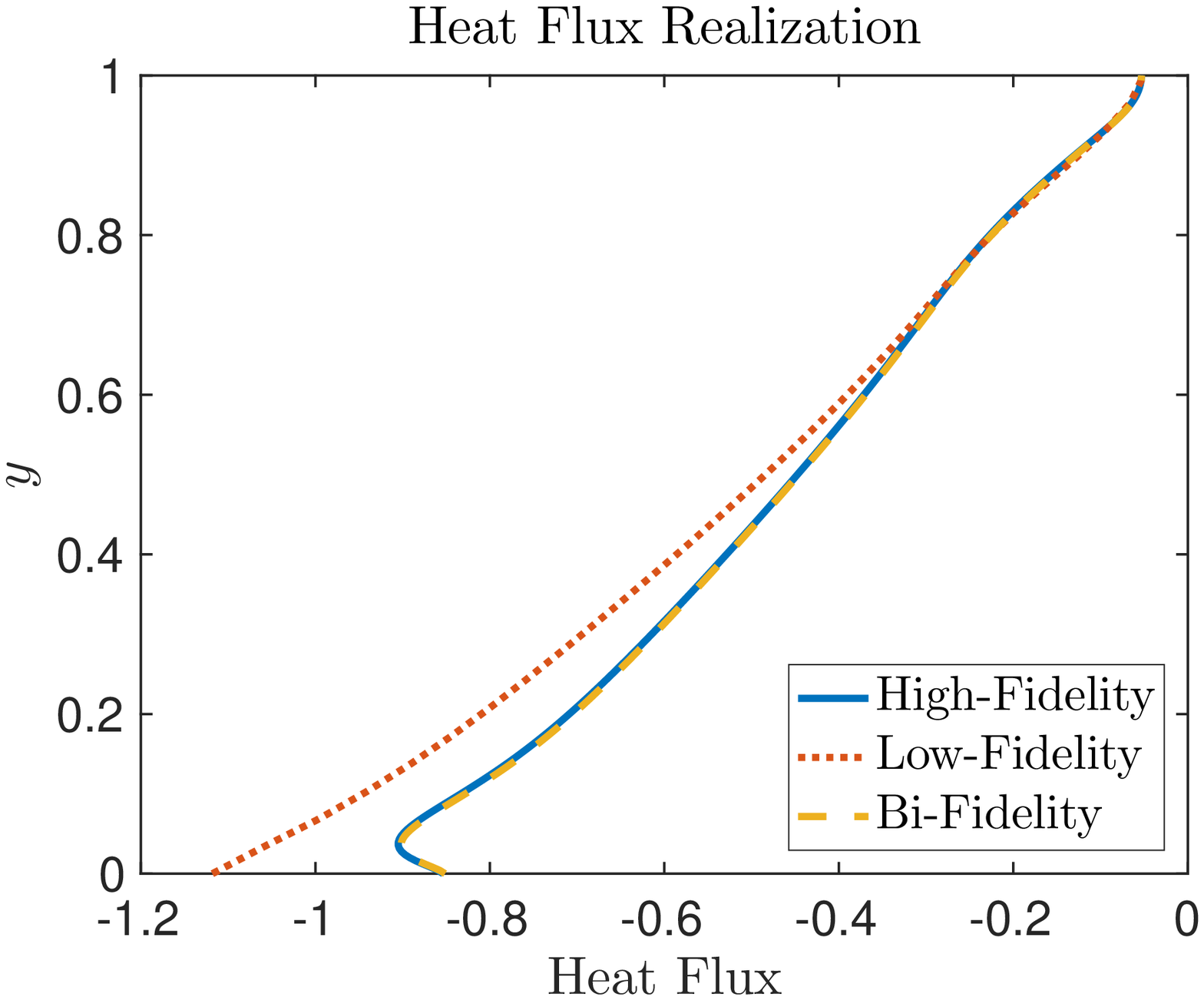}}\\
\subfloat[64 $\times$ 64 Low-Fidelity Mesh]{\includegraphics[width = 0.49\textwidth]{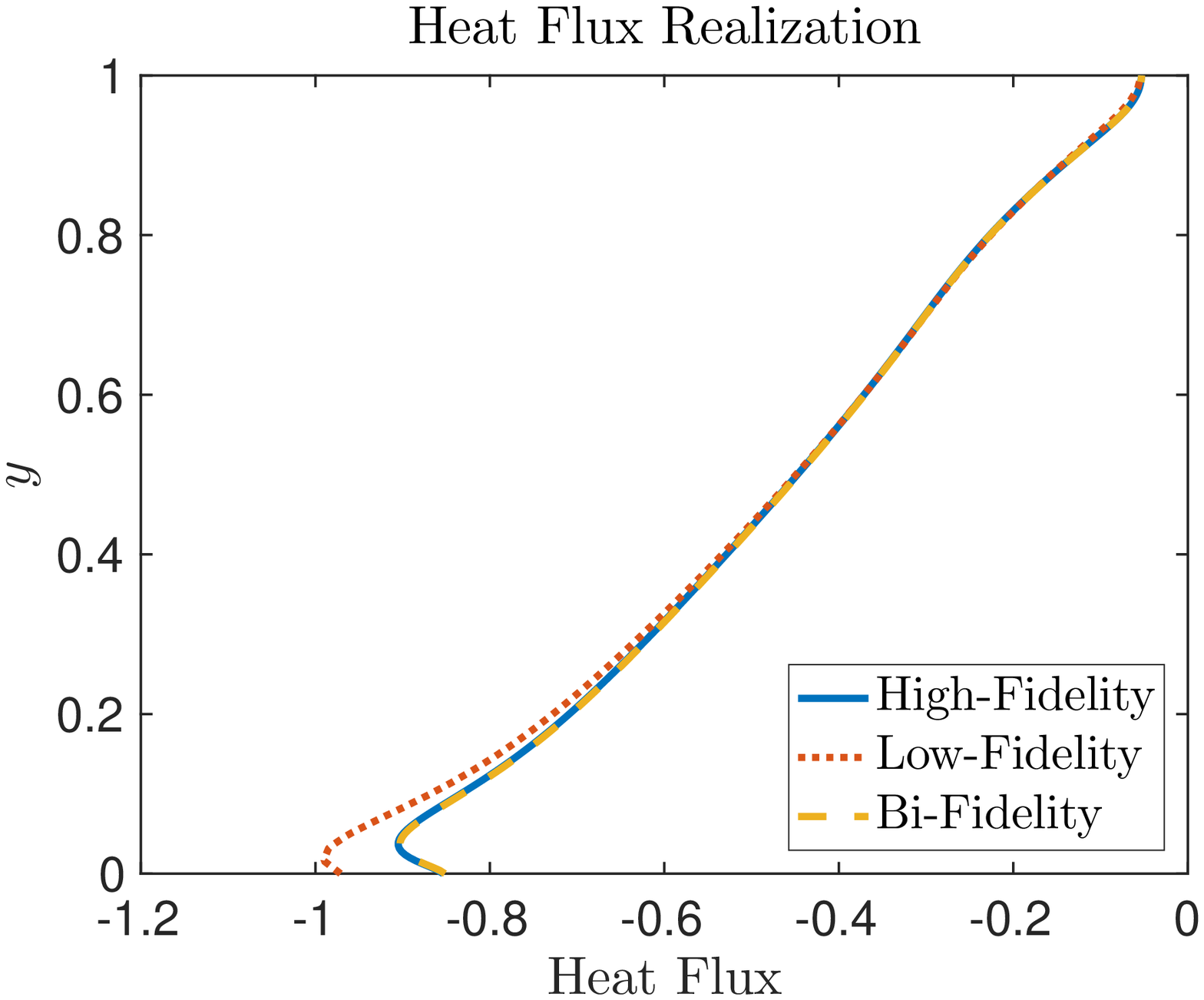}}\hspace{1mm}
\subfloat[128 $\times$ 128 Low-Fidelity Mesh]{\includegraphics[width = 0.49\textwidth]{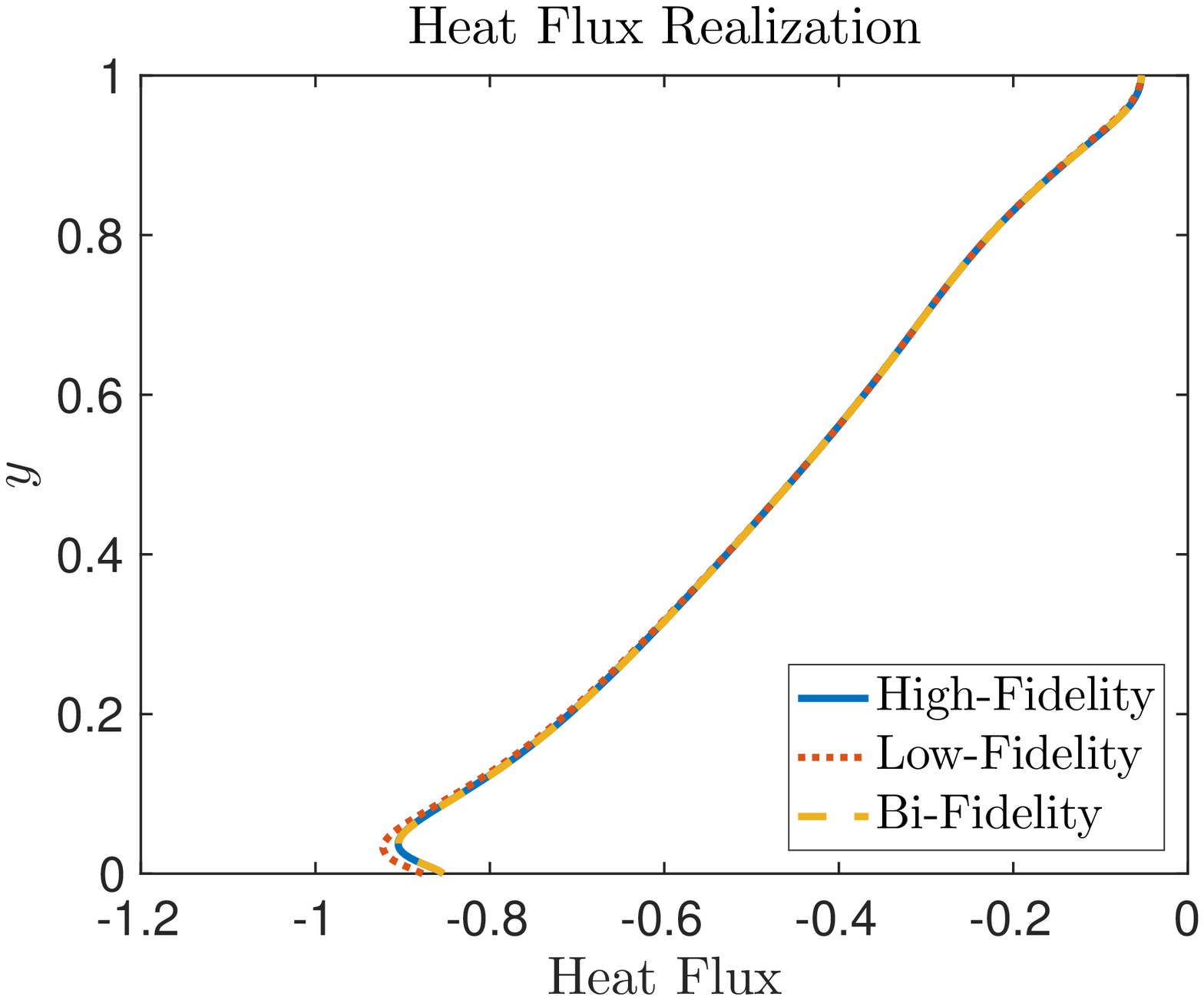}}
\caption{Realizations of heat flux along the hot wall for a randomly selected input $\bm\mu$. Shown are the low-fidelity, high-fidelity, and rank $r=10$ bi-fidelity estimates for the various low-fidelity models.}
\label{fig:bi_fi_realization_heatflux}
\end{figure}
\begin{figure}[H]
\centering
\subfloat[16 $\times$ 16 Low-Fidelity Mesh]{\includegraphics[width = 0.49\textwidth]{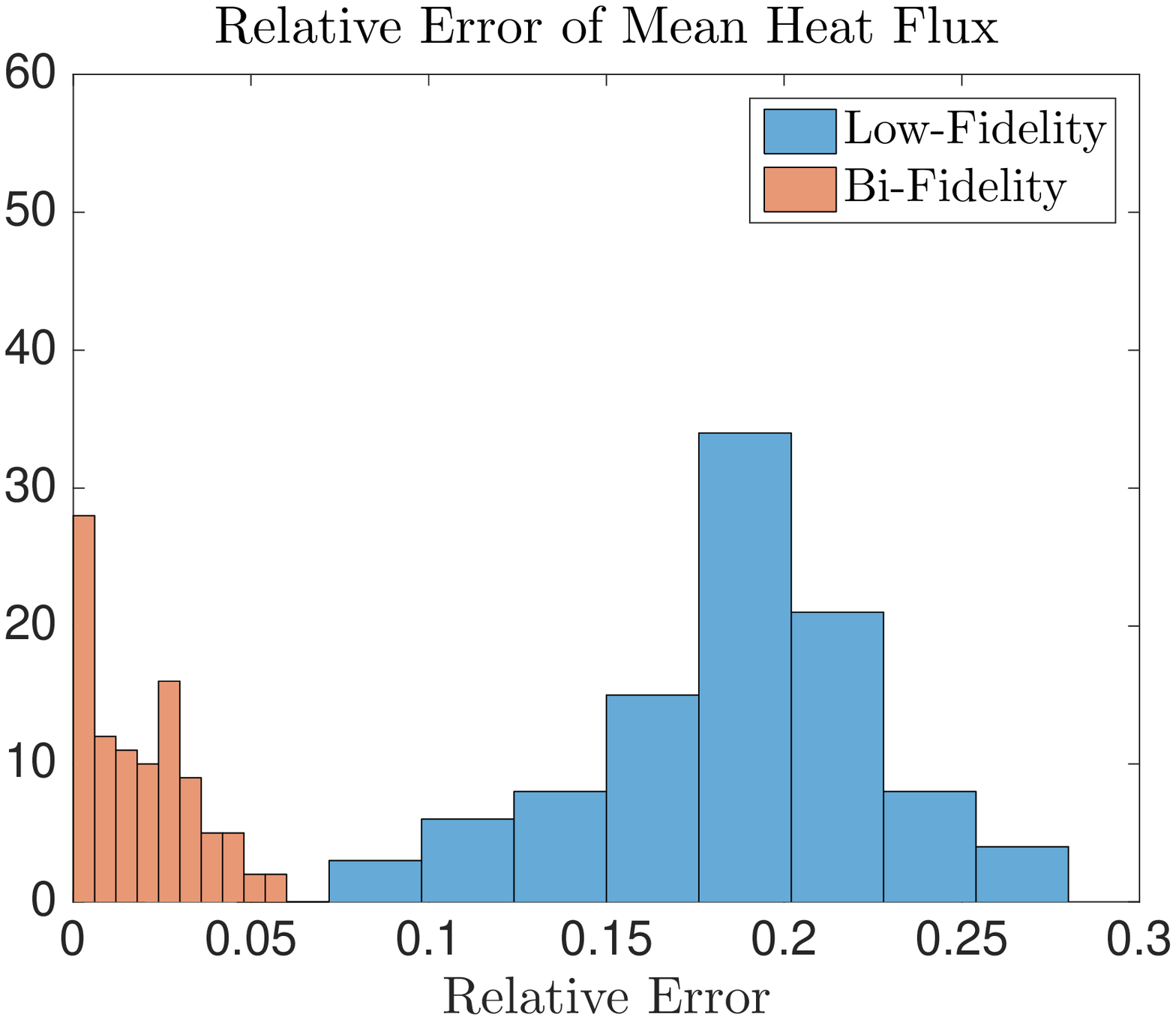}}\hspace{1mm}
\subfloat[32 $\times$ 32 Low-Fidelity Mesh]{\includegraphics[width = 0.49\textwidth]{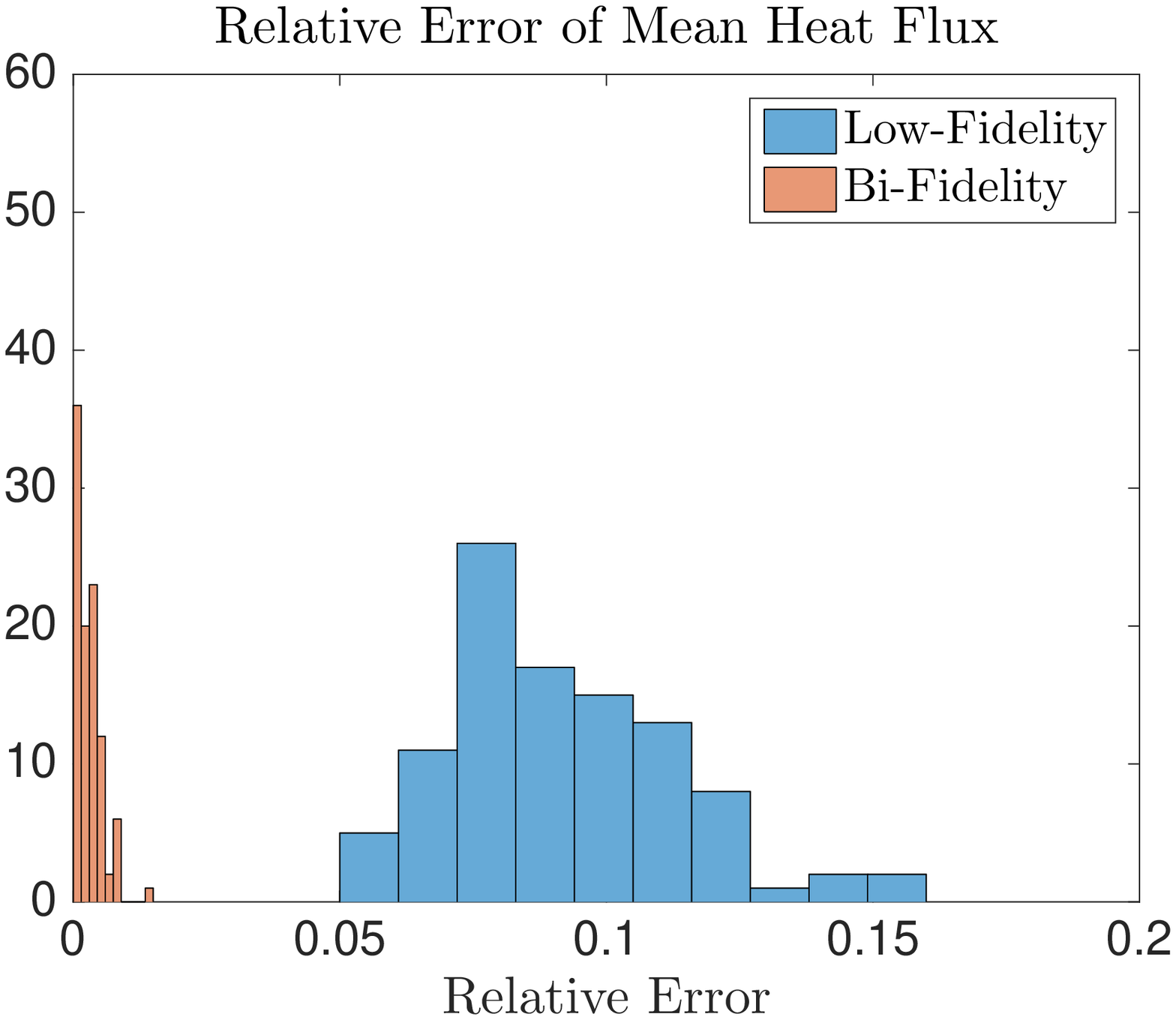}}\\
\subfloat[64 $\times$ 64 Low-Fidelity Mesh]{\includegraphics[width = 0.49\textwidth]{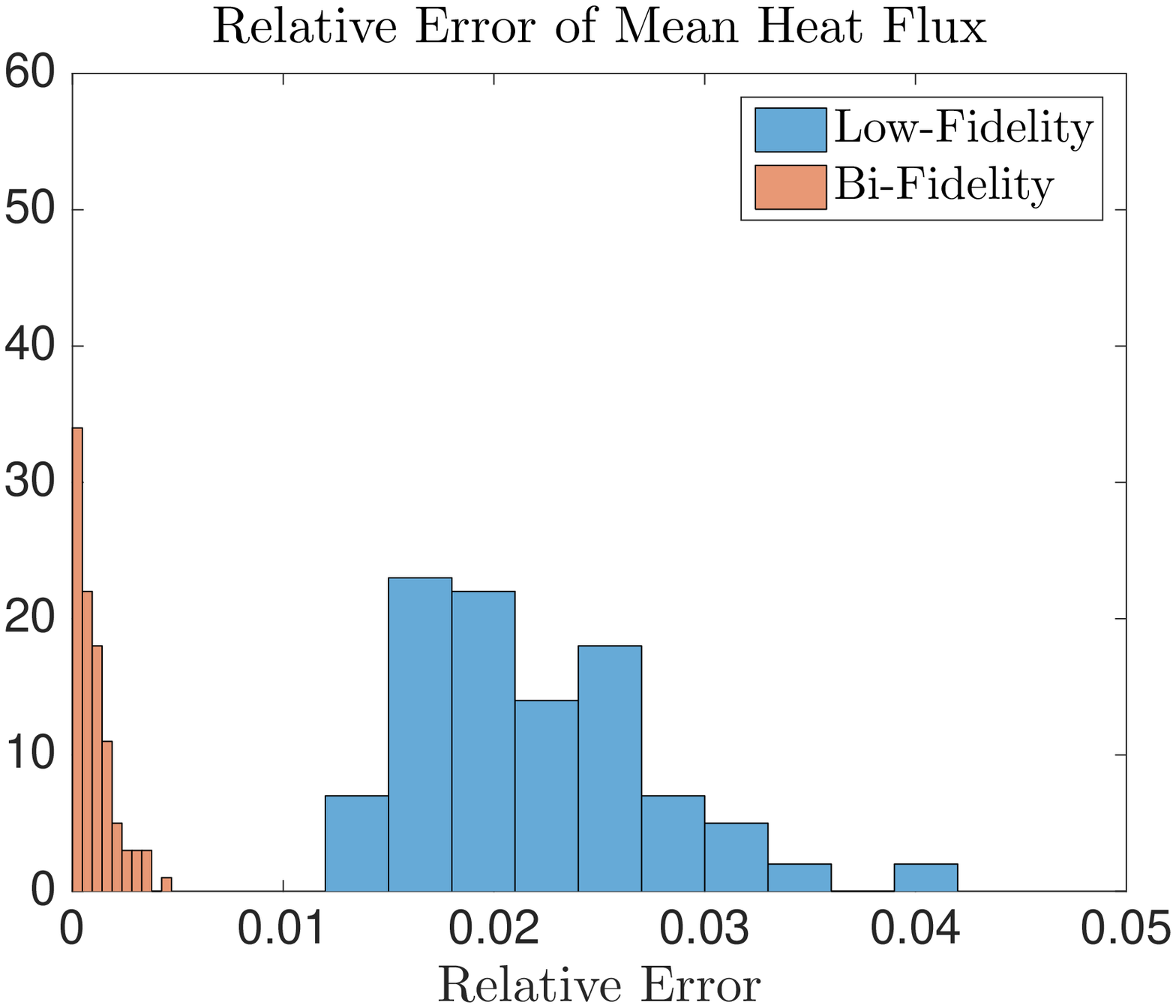}}\hspace{1mm}
\subfloat[128 $\times$ 128 Low-Fidelity Mesh]{\includegraphics[width = 0.49\textwidth]{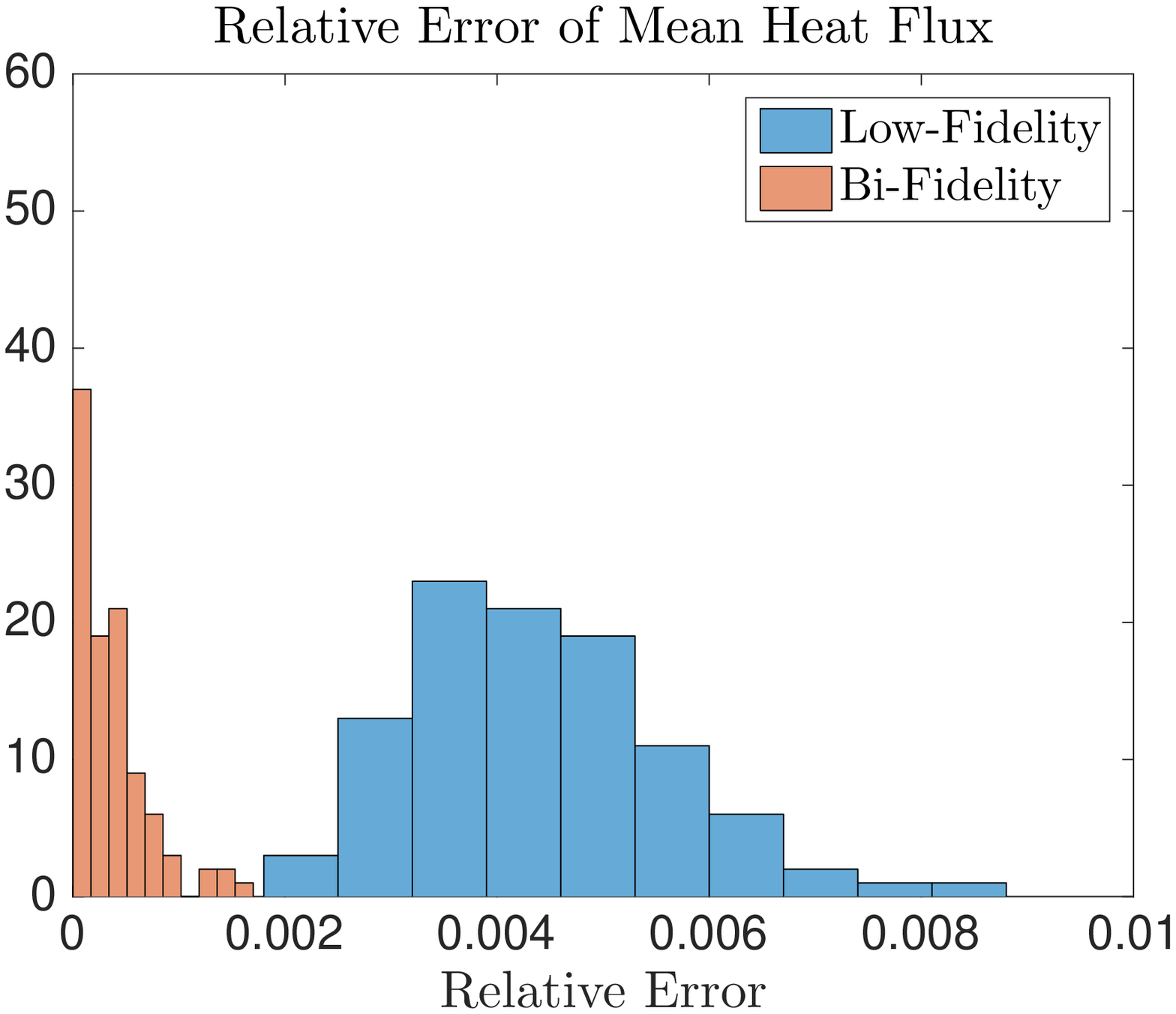}}
\caption{Histograms of low-fidelity and rank $r=10$ bi-fidelity errors normalized by $\Vert \bm{H}\Vert$ for various low-fidelity models.}
\label{fig:cavity_hists}
\end{figure}
\begin{figure}[H]
\centering
\subfloat[16 $\times$ 16 Low-Fidelity Mesh]{\includegraphics[width = 0.47\textwidth]{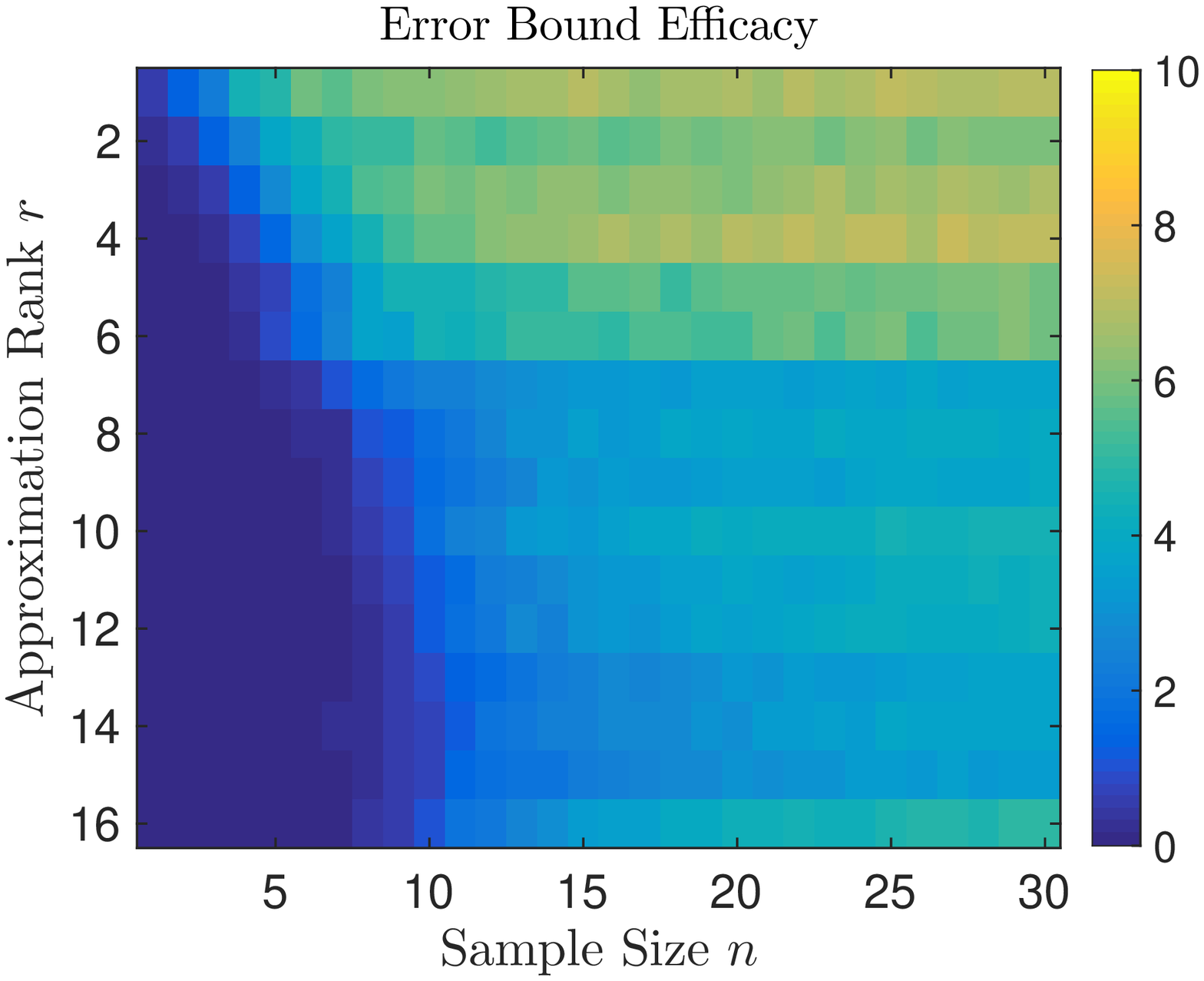}}\hspace{1mm}
\subfloat[32 $\times$ 32 Low-Fidelity Mesh]{\includegraphics[width = 0.47\textwidth]{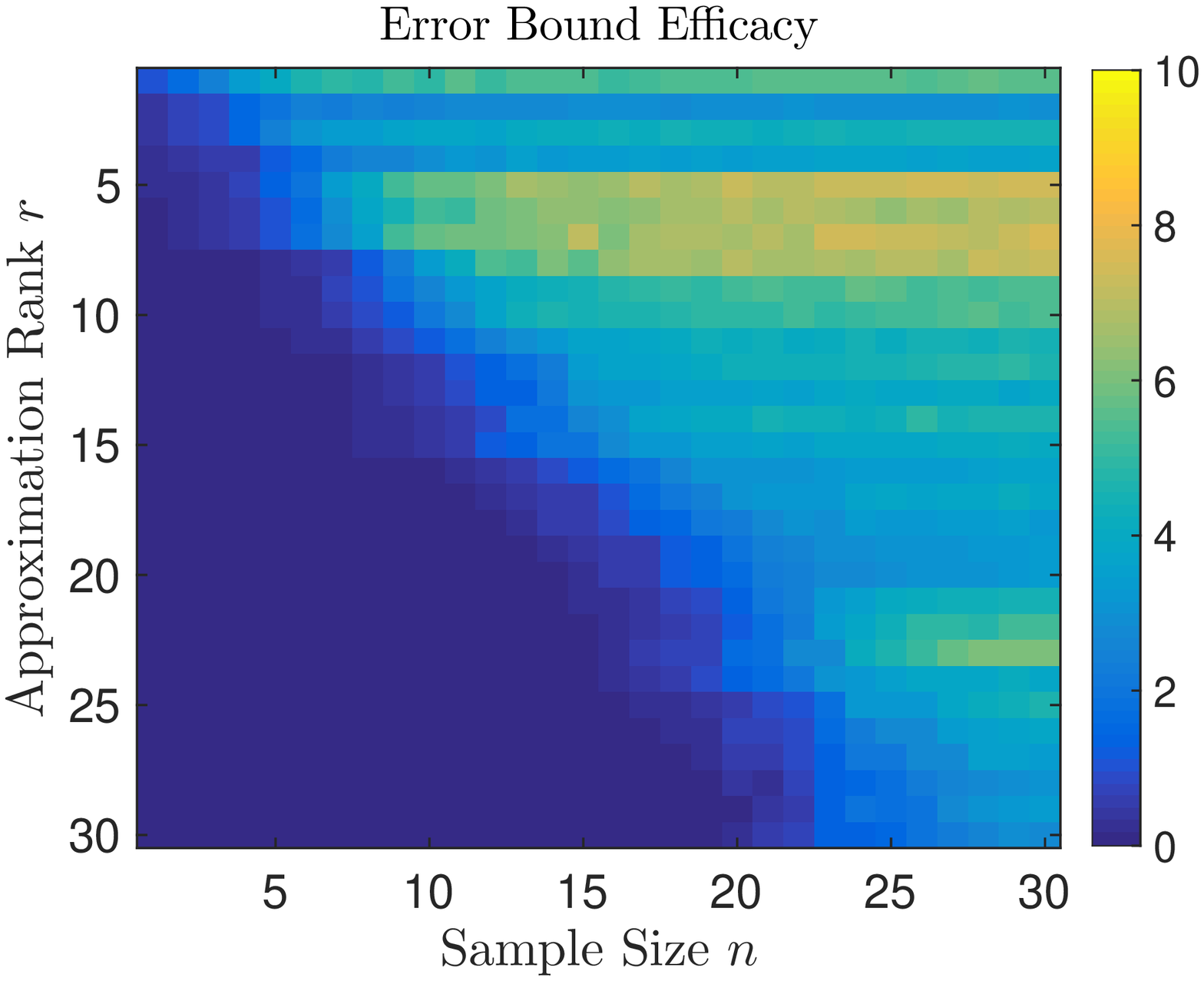}}\\
\subfloat[64 $\times$ 64 Low-Fidelity Mesh]{\includegraphics[width = 0.47\textwidth]{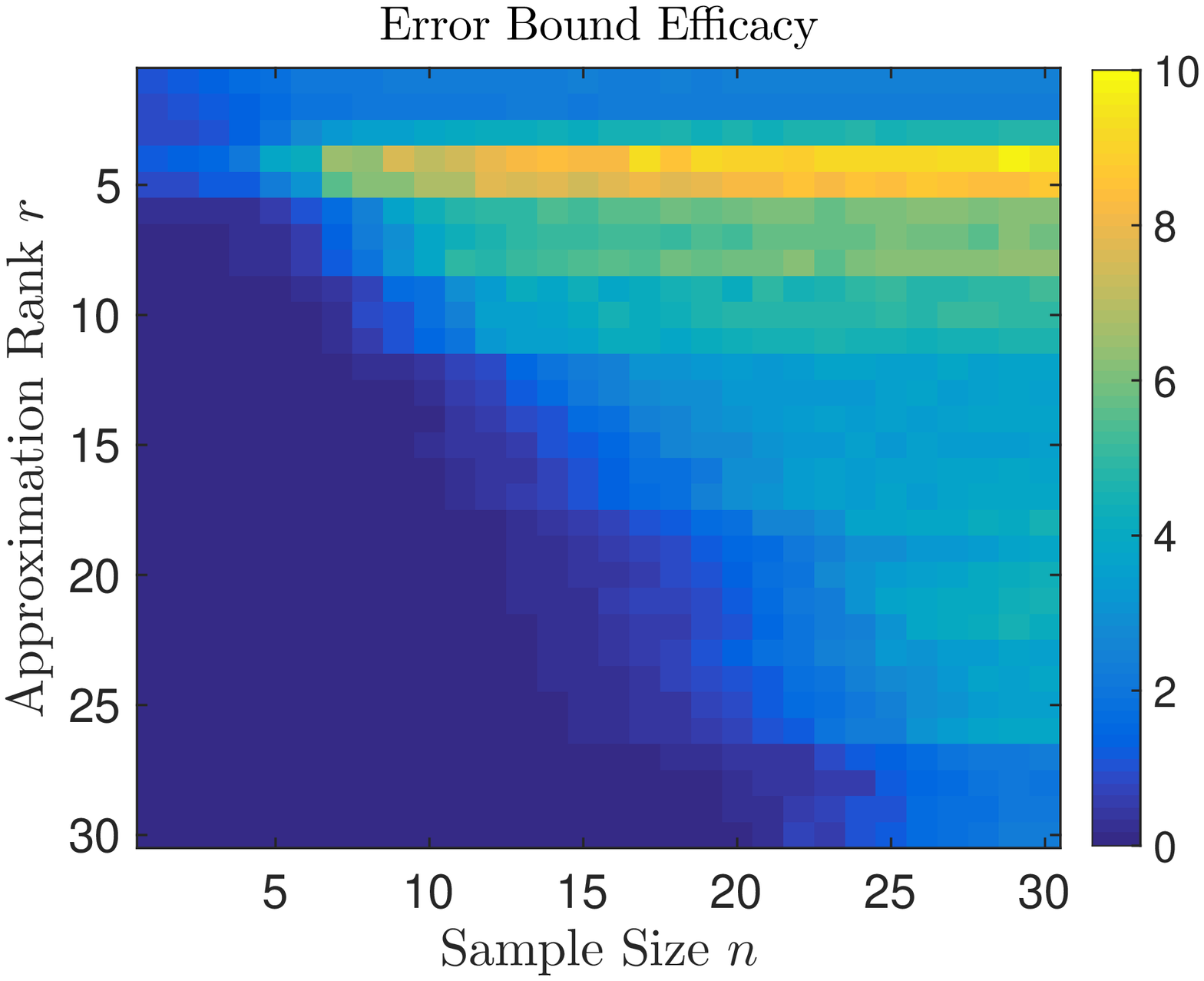}}\hspace{1mm}
\subfloat[128 $\times$ 128 Low-Fidelity Mesh]{\includegraphics[width = 0.47\textwidth]{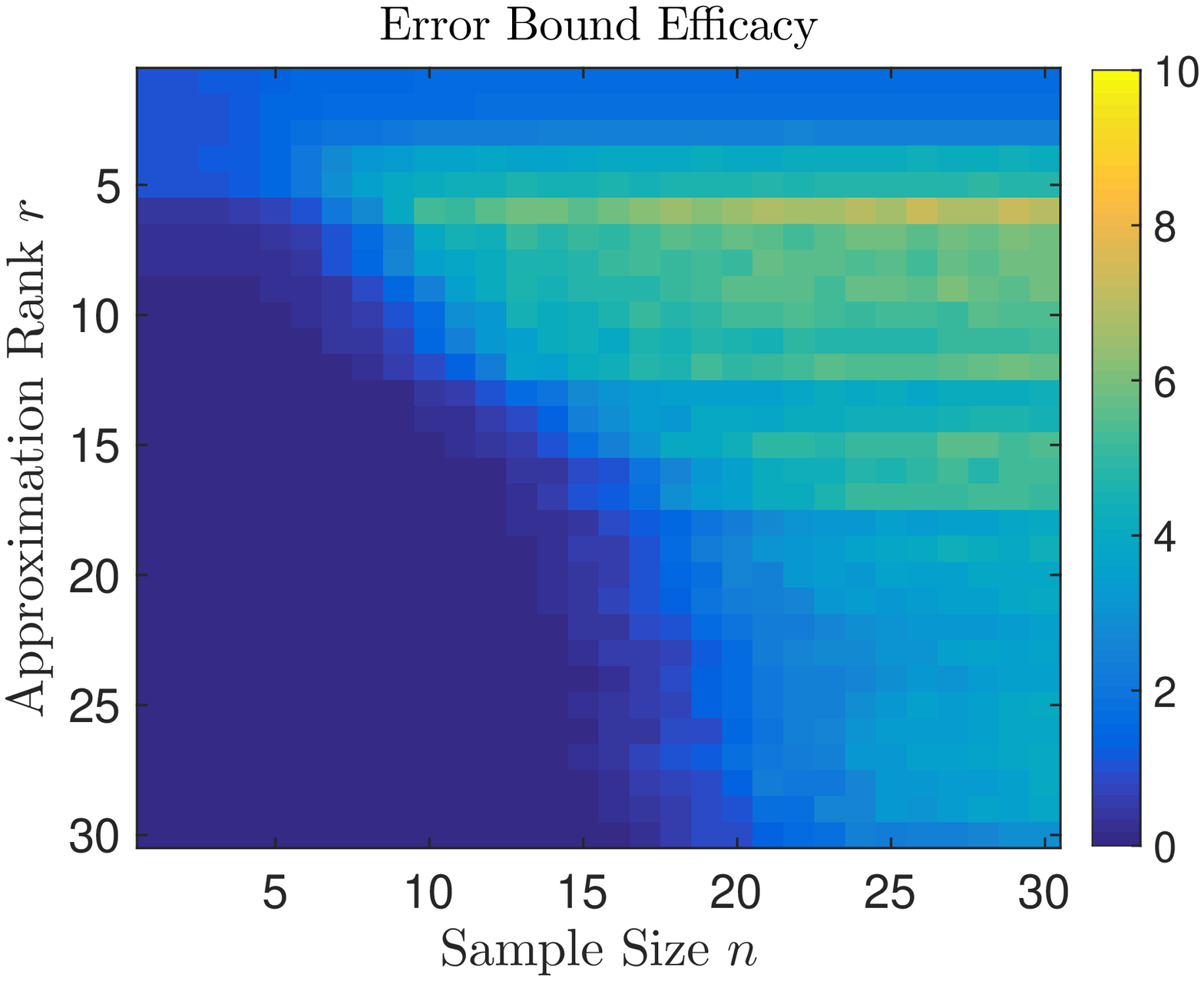}}
\caption{Identification of error bound efficacy, i.e., the (average) ratio between the error estimated from (\ref{eqn:main_bound}) and the true error, for various low-fidelity meshes, approximation ranks $r$, and sample sizes $n$. For each pair $(n,r)$ the reported error ratio is the average of $30$ ratios, each computed from an independent set of $n$ high- and low-fidelity samples via Algorithm \ref{alg:bound}.}
\label{fig:cavity_err_eff}
\end{figure}

To address the estimation of (\ref{eqn:main_bound}) numerically, we consider in Figure~\ref{fig:eps_tau_funcs} how $\hat{\epsilon}(\tau)$ in (\ref{eq:eps_tau_hat_def}) behaves as a function of $\tau$ for the case of $r=10$ and $n=20$. We further mark points that optimize (\ref{eqn:main_bound}). We note the reduction in the optimal values of $\tau$ and $\hat{\epsilon}(\tau)$ as the mesh of the low-fidelity model is refined.
\begin{figure}[H]
\centering
\includegraphics[width = 0.47\textwidth]{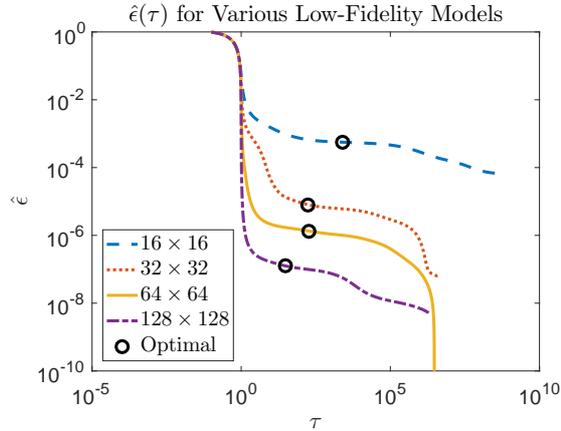}
\caption{$\hat{\epsilon}(\tau)$ from (\ref{eq:eps_tau_hat_def}) as a function of $\tau$ for different low-fidelity model meshes. In all cases the approximation rank $r=10$ and sample size $n=20$. Values which optimize (\ref{eqn:main_bound}) are marked by black circles.}
\label{fig:eps_tau_funcs}
\end{figure}

We can also explore the behavior of $\rho_k(\tau)$ as a function of $\tau$ in \eqref{eq:error_func}, noting that minimizing over $\tau$ and $k$ yields our main estimate \eqref{eqn:main_bound_rho}.
%
%
This function is seen for different low-fidelity model meshes in Figure~\ref{fig:error_funcs}. We observe that as the low-fidelity meshes are refined, $\tau$ converges to $1$, corresponding to a broadly defined convergence of low-fidelity models to the high-fidelity model. We also note, that there is not a high level of sensitivity with regards to $\tau$, and that a large range of $\tau$ produces comparable bounds. Figure~\ref{fig:error_funcs} also illustrates the benefit that the optimization of $\tau$ need not be particularly accurate, and that the general shape of these curves is smooth enough that derivative-based optimization can be employed, limiting the number of $\tau$ for which $\hat{\epsilon}(\tau)$ needs to be evaluated. 

It is worth highlighting that the heuristic employed here is that the size of the matrices used in $\hat{\epsilon}(\tau)$ need only depend on the approximation rank $r$ and scale nearly as such. As a result, the cost of evaluating $\hat{\epsilon}(\tau)$ will typically not be prohibitively large, as illustrated in Figure~\ref{fig:error_sample_size} by the rapid convergence of the optimal $\hat{\epsilon}(\tau)$ with respect to $n$. Finally, Figure~\ref{fig:error_ests} shows the convergence of the $r = 10$ bi-fidelity solution as a function of the low-fidelity mesh size. The error bound estimate -- relative to $\bm H$ -- is displayed for two sample sizes of $n$, and the low-fidelity error -- also relative to $\bm H$ -- is provided for comparison. Notice that, aside for a single point from the coarsest low-fidelity model, all the estimated errors are below the corresponding low-fidelity errors. In addition, the error bound estimate decays with the bi-fidelity error, as the mesh is refined.

\begin{figure}[H]
\centering
\subfloat[Sample Size $n=12$]{\includegraphics[width = 0.49\textwidth]{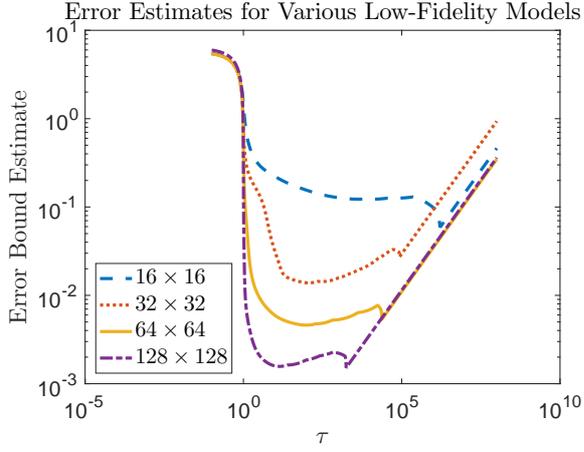}}\hspace{1mm}
\subfloat[Sample Size $n=20$]{\includegraphics[width = 0.49\textwidth]{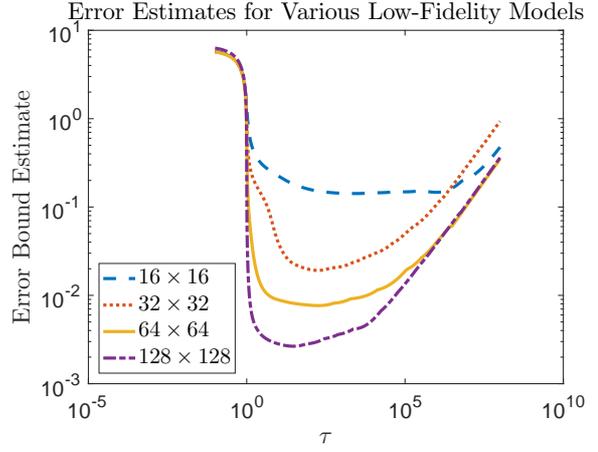}}
\caption{Minimum (over $k$) of error function $\rho_k(\tau)$ in (\ref{eq:error_func}) for various low-fidelity models using an approximation rank $r=10$. Error is normalized by $\Vert\bm H\Vert$.}
\label{fig:error_funcs}
\end{figure}
\begin{figure}[H]
\centering
\subfloat[16 $\times$ 16 Low-Fidelity Mesh]{\includegraphics[width = 0.48\textwidth]{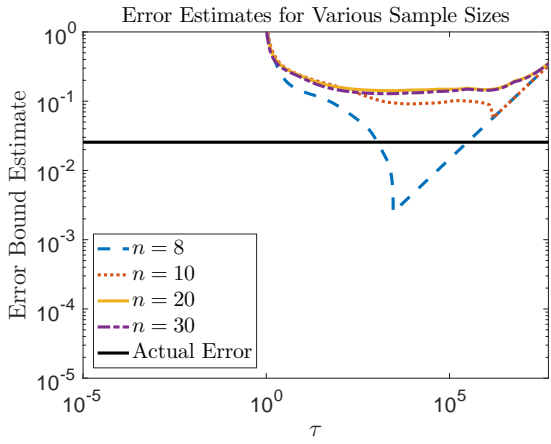}}\hspace{1mm}
\subfloat[128 $\times$ 128 Low-Fidelity Mesh]{\includegraphics[width = 0.48\textwidth]{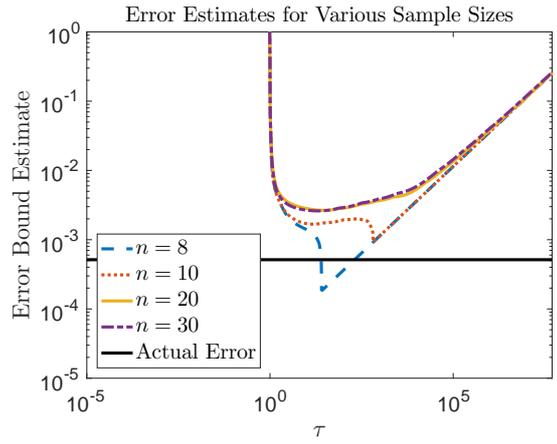}}
\caption{Minimum (over $k$) of error function $\rho_k(\tau)$ in (\ref{eq:error_func}) for for different low-fidelity models using different sample sizes $n$. All bi-fidelity solutions are of rank $r=10$, and error is normalized by $\Vert\bm H\Vert$.}
\label{fig:error_sample_size}
\end{figure}
\begin{figure}[H]
\centering
\includegraphics[width = 0.47\textwidth]{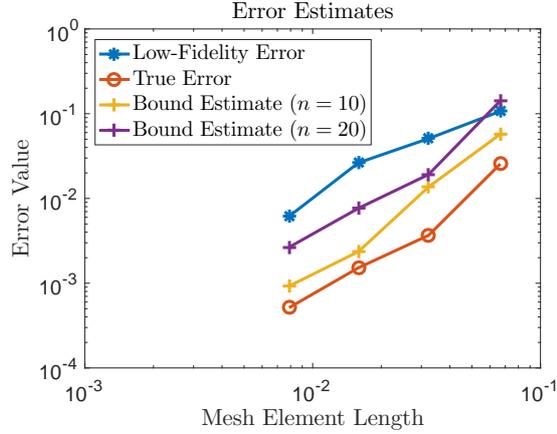}
\caption{Estimate of (\ref{eqn:main_bound}) normalized by $\Vert \bm H\Vert$ for various low-fidelity models and using different sample sizes $n$.}
\label{fig:error_ests}
\end{figure}
\subsection{Test Case 2: Composite Beam}
\label{subsec:beam}

For the second test case, we consider the deformation of a plane-stress, cantilever beam with composite cross section and hollow web, as shown in Figure~\ref{fig:beam_schematics}. The Young's moduli of the three components of the cross section as well as the intensity of the applied distributed force on the beam are assumed to be uncertain, and are modeled by independent uniform random variables. Table~\ref{table:parameters} provides a summary of the parameters of this model along with the description of the uncertain inputs. Here, the QoI is the vertical displacement of the top cord and, in particular, its maximum occurring at the free end. To construct the bi-fidelity approximation, we use realizations of the vertical displacement over the entire top cord given by the low- and high-fidelity models.

\begin{figure}[H]
\centering
\includegraphics[trim = 17mm 70mm 20mm 70mm, clip, width = 0.8\textwidth]{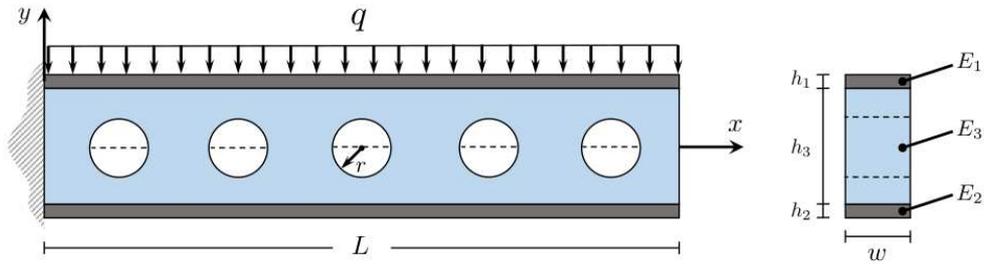}
\caption{Schematic of the cantilever beam (left) and its composite cross section (right).}
\label{fig:beam_schematics}
\end{figure}

\begin{figure}[H]
\centering
\includegraphics[trim = 0mm 80mm 0mm 70mm, clip, width = 0.8\textwidth]{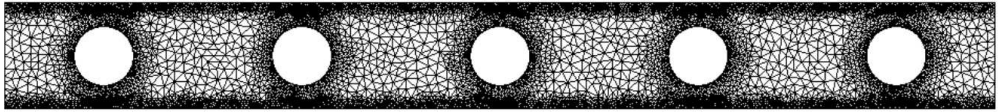}
\caption{Finite element mesh used for high-fidelity simulation of the vertical displacement.}
\label{fig:beam_mesh}
\end{figure}

\begin{table}[htb]
\caption{Description of the parameters of the composite cantilever beam model. The center of the holes are at $x=\{5,15,25,35,45\}$.} 

\vspace{.3cm}
\resizebox{\textwidth}{!}{
\centering
\begin{tabular}{ c c c c c c c c c c}   
\hline
$L$ & $h_1$ & $h_2$ & $h_3$ & $w$ & $r$ & $q$ & $E_1$ & $E_2$ & $E_3$ \\
\hline 
50  &  0.1 & 0.1 & 5 &  1 &  1.5 & $U[9,11]$ & $U[0.9e6,1.1e6]$  &  $U[0.9e6,1.1e6]$ & $U[0.9e4,1.1e4]$  \\
\hline 
\end{tabular} 
}
\label{table:parameters} 
\end{table}

Unlike in the previous test case where the low-fidelity model simulated the same physical problem but on a coarser mesh, here the low-fidelity model solves a simpler physical model. Specifically, the high-fidelity model is based on a finite element (FE) discretization of the beam using a triangular mesh with linear elements; see Figure~\ref{fig:beam_mesh}. The mesh is fine enough to describe the web's hollow geometry and achieve mesh independent predictions of the vertical displacement at the free end. The FE simulations are performed using FEniCS \cite{LoggMardalEtAl2012a}. The low-fidelity model is based on Euler-Bernoulli beam theory, where vertical cross sections are assumed to remain planes throughout the deformation and, thus, the shear deformation of the web is ignored. Additionally, we simplify the geometry of the beam by ignoring the circular holes. These simplifications, while making the calculations straightforward, lead to {\it inaccurate} low-fidelity predictions of the displacement (Figure~\ref{fig:bi_fi_realization_bar}), as the web experiences considerable shear deformation due to the presence of the circular holes.

Following the Euler-Bernoulli theorem, the vertical displacement of the beam $u(x)$ is given by
\begin{equation}
\label{eqn:EB_disp}
EI\frac{\text{d}^4 u(x)}{\text{d}x^4}=-q,
\end{equation}
where $E$ and $I$ are, respectively, the Young's modulus and the moment of inertia of an {\it equivalent} cross section (about its centroid axis) consisting of a single material. Specifically, we let $E=E_3$ and consider an equivalent cross section in which the width of the top and bottom sections are set to $w_1=(E_1/E_3)w$ and $w_2=(E_2/E_3)w$, respectively, while all other dimensions remain as before; see Figure~\ref{fig:beam_schematics} and Table~\ref{table:parameters}. The solution to (\ref{eqn:EB_disp}) for the considered beam is then given by 
\begin{equation}
\label{eqn:EB_disp_sol}
u(x)=-\frac{qL^4}{24EI}\left(\left(\frac{x}{L}\right)^4-4\left(\frac{x}{L}\right)^3+6\left(\frac{x}{L}\right)^2\right).
\end{equation}

Notice that all realizations of $u(x)$ depend on $x$ identically, albeit with different coefficient $qL^4/24EI$. This implies that the low-fidelity solution is theoretically of rank $r=1$ and that only one high-fidelity realization of $u(x)$ is needed to generate the bi-fidelity approximation. Stated differently, the bi-fidelity approximation cannot be improved by increasing the rank beyond $r=1$. This has significant implications for $\epsilon(\tau)$ in (\ref{eqn:eps_tau_def}) and the corresponding interpretation of Theorem~\ref{thm:algorithm}. Specifically, with regards to (\ref{eqn:main_bound}) we note that the only valid choice of $r$ is $r=1$; therefore, both $\sigma_{r+1}$ and $\|\bm{L}-\hat{\bm{L}}\|$ are precisely zero so that
\begin{align*}
\|\bm{H}-\hat{\bm{H}}\| &\le \mathop{\min}\limits_{\tau}\left(1 + \|\bm{C}_L\|\right)\sqrt{\epsilon(\tau)}.
\end{align*}
As $\epsilon$ is a decreasing function of $\tau$, we note that the minimizing value of $\tau$ occurs in the limit as $\tau\rightarrow\infty$. This can be numerically observed from Figure~\ref{fig:eps_tau_funcs_bar}. Even though the low-fidelity model is of a minimal rank, the high-fidelity model is itself reasonably well approximated by a representation of rank $r=1$. This is seen in Figure~\ref{fig:bi_fi_realization_bar} for two randomly selected choices of input parameters, illustrating a considerably closer agreement between the bi- and high-fidelity samples relative to the low- and high-fidelity samples. Figure~\ref{fig:beam_hist} shows histograms of the low- and bi-fidelity errors of the maximum displacement, where the average error achieved by the bi-fidelity model is roughly an order of magnitude smaller. In Figure~\ref{fig:beam_eps_tau_funcs_bar} we see that the error bound stabilizes by around $7$ samples and overestimates the true error by a factor $9$. With regards to the efficacy of the bound (\ref{eqn:main_bound}) and its dependency on $\tau$ as a function of the sample size $n$ used to determine an optimal $\hat{\epsilon}(\tau)$, we see in Figure~\ref{fig:error_ests_bar_2} that the bound estimate, and specifically its dependence on $\tau$, is robust to $n$, converging quickly with respect to $n$.

\begin{figure}[H]
\centering
\includegraphics[width = 0.47\textwidth]{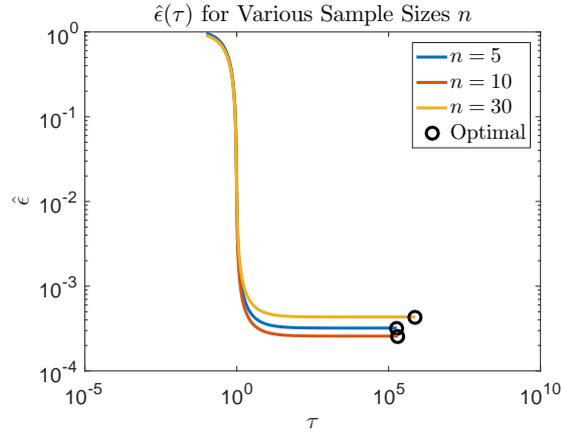}
\caption{$\hat{\epsilon}(\tau)$ from (\ref{eq:eps_tau_hat_def}) as a function of $\tau$ for different sample sizes $n$. Values which optimize (\ref{eqn:main_bound}) for a rank $r=1$ approximation are marked.}
\label{fig:eps_tau_funcs_bar}
\end{figure}
\begin{figure}[H]
\centering
\subfloat[Realization 1]{\includegraphics[width = 0.47\textwidth]{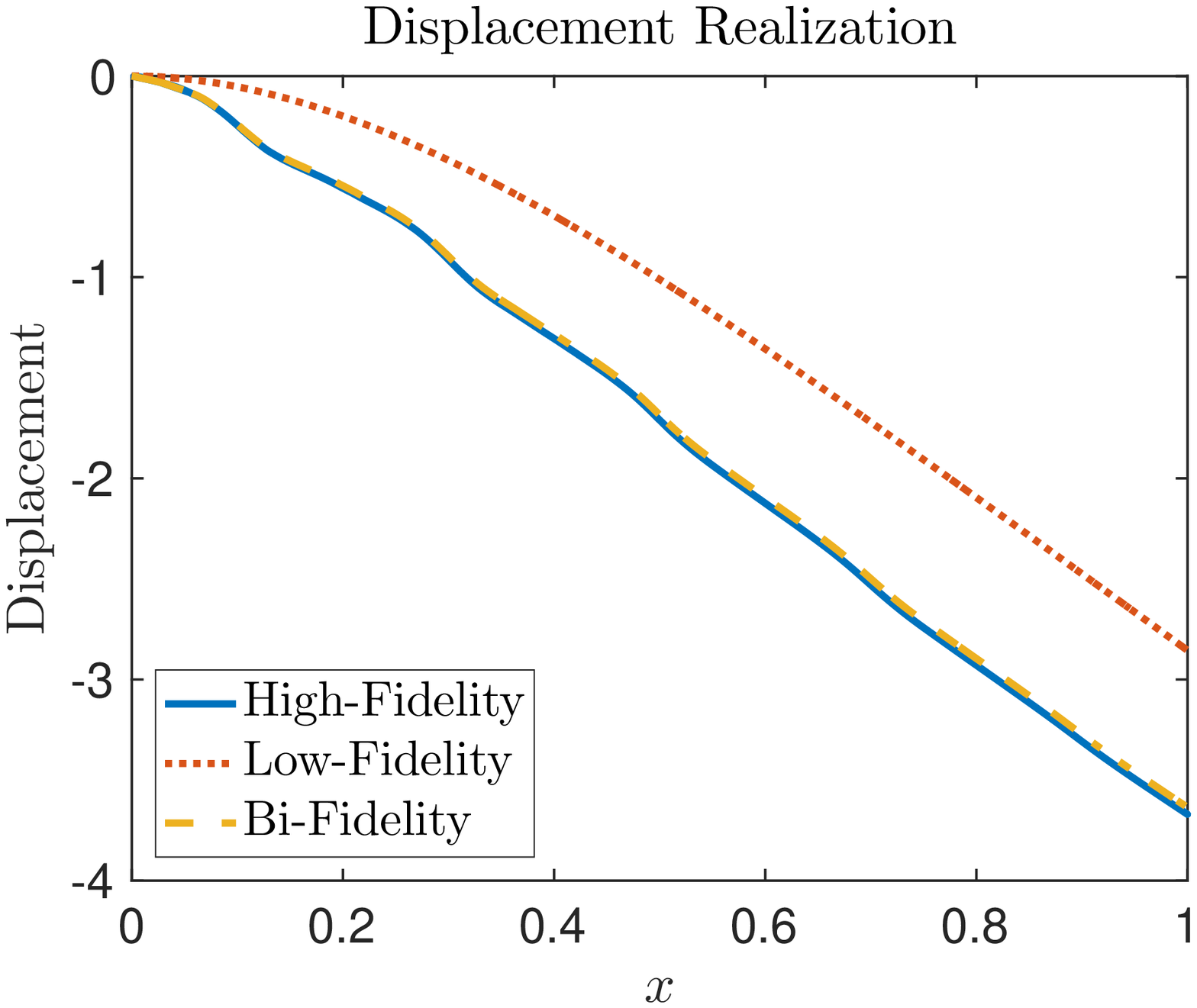}}
\subfloat[Realization 2]{\includegraphics[width = 0.47\textwidth]{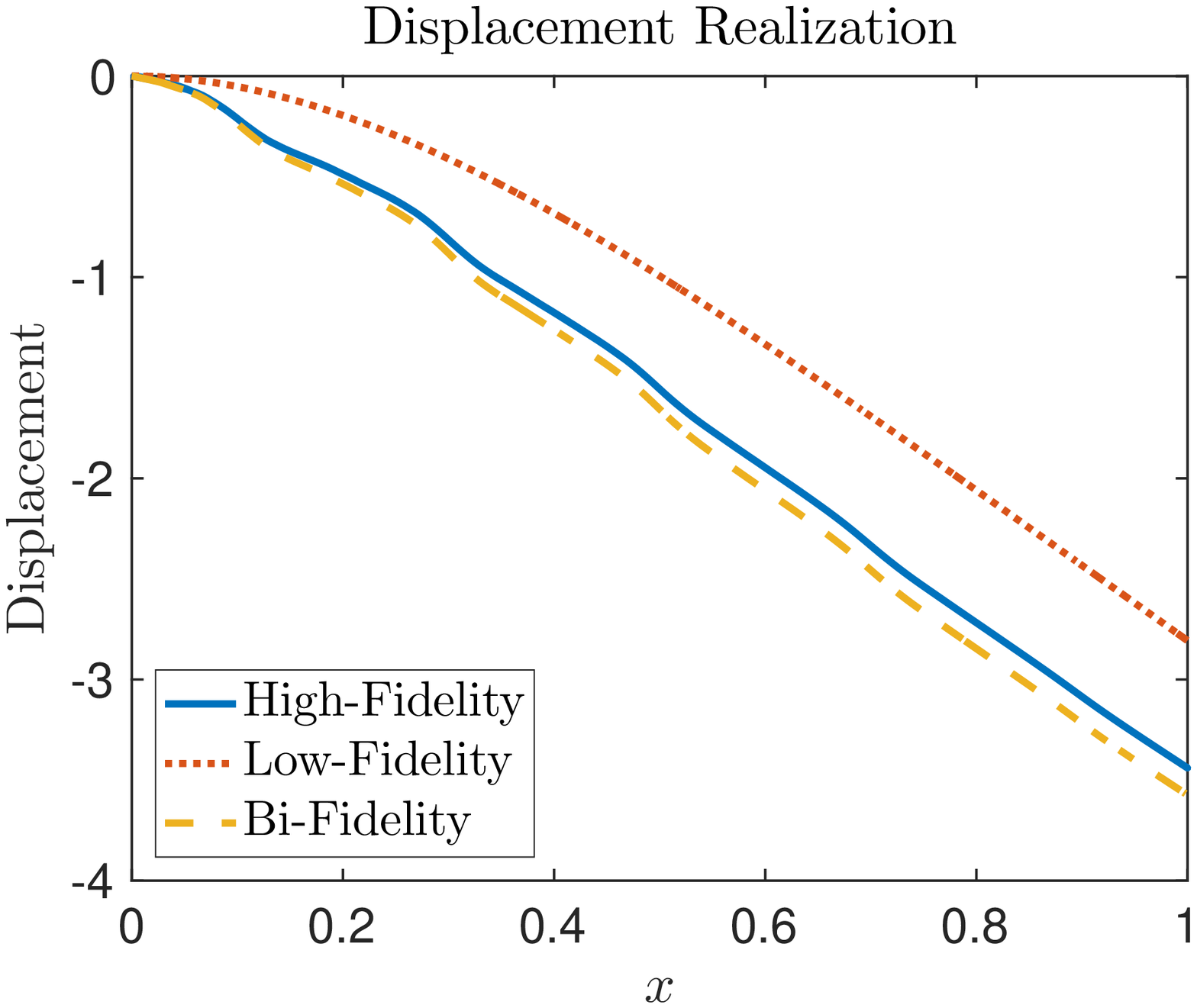}}
\caption{Realizations of vertical displacement for two randomly selected input parameters $\bm \mu$. Shown are the low-fidelity, high-fidelity, and rank $r=1$ bi-fidelity estimates.}
\label{fig:bi_fi_realization_bar}
\end{figure}
\begin{figure}[H]
\centering
\includegraphics[width = 0.47\textwidth]{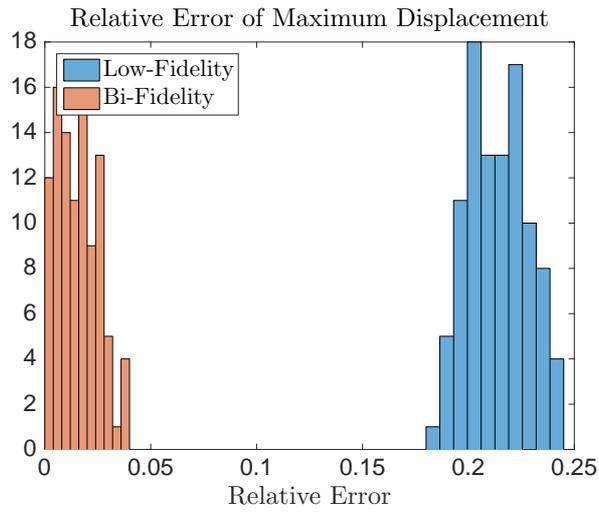}
\caption{Histogram for error in the maximum displacement for low- and bi-fidelity models.}
\label{fig:beam_hist}
\end{figure}

\begin{figure}[H]
\centering
\includegraphics[width = 0.47\textwidth]{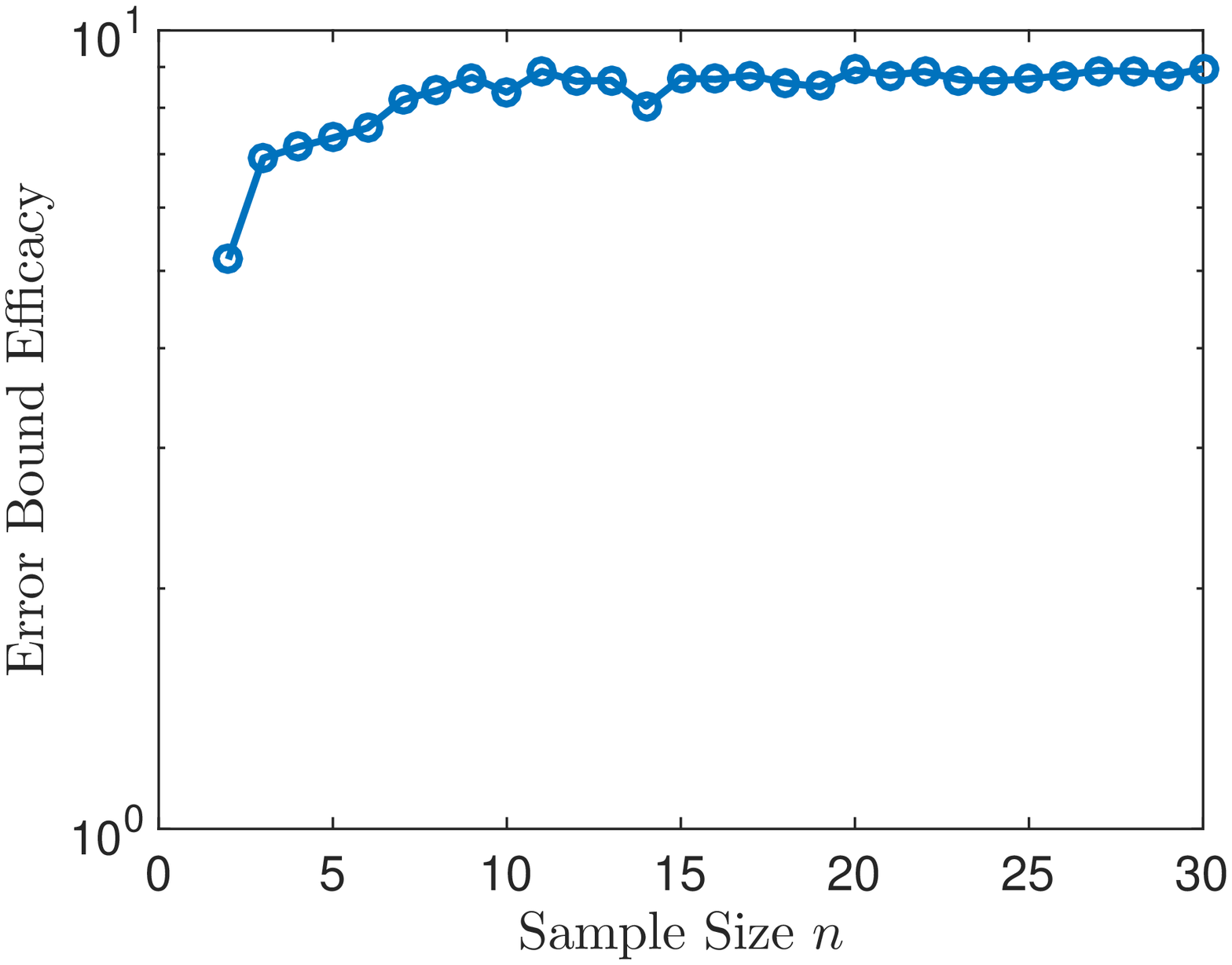}
\caption{Identification of error bound efficacy, i.e., the (average) ratio between the error estimated from (\ref{eqn:main_bound}) and the true error, for various sample sizes $n$. For each $n$ the reported error ratio is the average of $30$ ratios, each computed from an independent set of $n$ high- and low-fidelity samples via Algorithm \ref{alg:bound}.}
\label{fig:beam_eps_tau_funcs_bar}
\end{figure}
\begin{figure}[H]
\centering
\includegraphics[width = 0.5\textwidth]{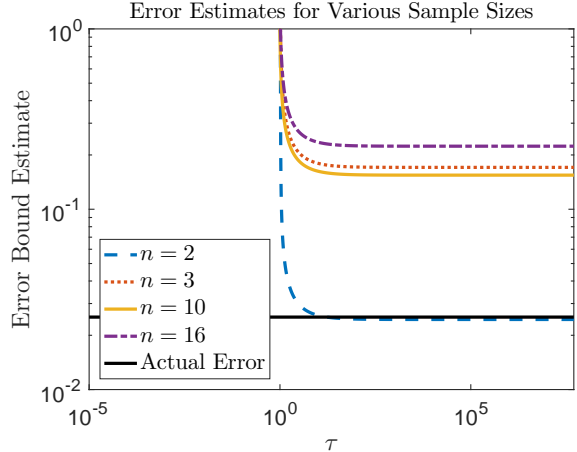}
\caption{Minimum (over $k$) of error function $\rho_k(\tau)$ in (\ref{eq:error_func})  using different sample sizes $n$. Error is normalized by $\Vert\bm H\Vert$.}
\label{fig:error_ests_bar_2}
\end{figure}
\section{Conclusions}
\label{sec:conc}

This work is concerned with the error analysis of a bi-fidelity, low-rank approximation technique for a reduced order solution of problems with stochastic or parametric inputs. The bi-fidelity method relies on the solution to a cheaper, lower-fidelity model of the intended expensive, high-fidelity model of the problem in order to identify a reduced basis and an interpolation rule for approximating the high-fidelity solution in its inputs. A novel and practical error bound is presented for this bi-fidelity approximation that is shown to not require many high-fidelity samples beyond the rank of the approximation, thus making the bound an effective tool for practical application of this approximation. The bi-fidelity estimate and the associated error bound are by construction non-intrusive, i.e., sample-based, and thus can use legacy solvers in a black box fashion. Some aspects of the bi-fidelity approximation and specifically the efficacy of the derived error bound are demonstrated on two numerical examples from fluid (non-linear) and solid (linear) mechanics. 

\section*{Acknowledgements}

The work of JH and AN was supported by the DARPA EQuiPS project, award number N660011524053. The work of HF was supported by the United States Department of Energy under the Predictive Science Academic Alliance Program 2 (PSAAP2) at Stanford University. This material is based upon work of AD supported by the U.S. Department of Energy Office of Science, Office of Advances Scientific Computing Research, under Award Number DE-SC0006402, and NSF grant CMMI-145460. AN was partially supported by AFOSR FA9550-15-1-0467.
\label{sec:acknow}

\appendix
\section{Theoretical Exposition}
\label{sec:theory}

This section is devoted to proving our main theoretical result, Theorem \ref{thm:algorithm}. Recall that $\bm{H}$ represents the QoI realization matrix for the high-fidelity model, and $\bm{L}$ the corresponding matrix for the low-fidelity model. Our analysis assumes that these matrices are related through matrices $\bm T$ and $\bm E$ according to (\ref{eqn:core_assumption}). Following the discussions of Section \ref{sec:theorem} and, in particular, the bound in (\ref{eq:the_one_bound_pre}), we seek to identify conditions implying that $\Vert\bm T\Vert$ is bounded and $\Vert\bm E\Vert$ is small. 

For $\tau\ge 0$, define 
\begin{align}
\label{eq:t_eps_def}
\epsilon(\tau) \coloneqq \mathop{\arg\min}_{\epsilon}\left\{ \forall \bm{x}\in\mathbb{R}^N:  \quad \|\bm{Hx}\|^2\le \tau\|\bm{Lx}\|^2 + \epsilon\|\bm{x}\|^2 \right\}.
\end{align}
Note that $\epsilon(\tau)$ is well-defined, is a non-increasing function of $\tau$ and, satisfies $\epsilon(\tau) \le \|\bm{H}\|^2$. Defining $\epsilon$ and $\tau$ this way is equivalent to the definition (\ref{eqn:eps_tau_def}) used in Theorem~\ref{thm:algorithm}. We present this equivalence here as a lemma.
\begin{lem}
\label{lem:algorithm}
For any $\tau$, for $\epsilon(\tau)$ defined as in (\ref{eq:t_eps_def}), it follows that
\begin{align}
\label{eqn:epsilon_t}\epsilon(\tau) &= \lambda_{\max}(\bm{H}^T\bm{H}-\tau \bm{L}^T\bm{L}),
\end{align}
i.e., $\epsilon(\tau)$ is the smallest $\epsilon$ such that $\tau \bm{L}^T\bm{L} + \epsilon \bm{I} - \bm{H}^T\bm{H}$ is a semi-positive-definite (SPD) matrix.
\end{lem}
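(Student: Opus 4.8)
The plan is to show that the two descriptions of $\epsilon(\tau)$ coincide by unwinding the quadratic-form inequality in \eqref{eq:t_eps_def} and recognizing it as a statement about positive semi-definiteness. First I would rewrite the condition $\|\bm{Hx}\|^2 \le \tau\|\bm{Lx}\|^2 + \epsilon\|\bm{x}\|^2$ for all $\bm{x}\in\mathbb{R}^N$ in terms of Gramians: since $\|\bm{Hx}\|^2 = \bm{x}^T\bm{H}^T\bm{H}\bm{x}$, $\|\bm{Lx}\|^2 = \bm{x}^T\bm{L}^T\bm{L}\bm{x}$, and $\|\bm{x}\|^2 = \bm{x}^T\bm{x}$, the inequality is equivalent to
\begin{align}
\bm{x}^T\left(\tau\bm{L}^T\bm{L} + \epsilon\bm{I} - \bm{H}^T\bm{H}\right)\bm{x} \ge 0 \qquad \text{for all } \bm{x}\in\mathbb{R}^N, \nonumber
\end{align}
i.e., $\tau\bm{L}^T\bm{L} + \epsilon\bm{I} - \bm{H}^T\bm{H} \succeq 0$. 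This is the SPD reformulation asserted at the end of the lemma.

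Next I would use the standard fact that a symmetric matrix $\bm{A}$ satisfies $\bm{A} + \epsilon\bm{I} \succeq 0$ if and only if $\epsilon \ge -\lambda_{\min}(\bm{A}) = \lambda_{\max}(-\bm{A})$. Applying this with $\bm{A} = \tau\bm{L}^T\bm{L} - \bm{H}^T\bm{H}$ (which is symmetric because both Gramians are), the set of admissible $\epsilon$ in \eqref{eq:t_eps_def} is exactly $\{\epsilon : \epsilon \ge \lambda_{\max}(\bm{H}^T\bm{H} - \tau\bm{L}^T\bm{L})\}$, so its minimum — the $\arg\min$ defining $\epsilon(\tau)$ — is precisely $\lambda_{\max}(\bm{H}^T\bm{H} - \tau\bm{L}^T\bm{L})$, which is \eqref{eqn:epsilon_t}. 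Along the way it is worth noting that this minimum is attained (the feasible set is closed), so $\epsilon(\tau)$ is genuinely well-defined as an $\arg\min$ rather than merely an infimum.

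Finally, for the parenthetical claims preceding the lemma statement, I would observe: monotonicity (non-increasing in $\tau$) follows because increasing $\tau$ only enlarges the feasible set of $\epsilon$ when $\bm{L}^T\bm{L} \succeq 0$ — more precisely, $\lambda_{\max}(\bm{H}^T\bm{H} - \tau\bm{L}^T\bm{L})$ is non-increasing in $\tau$ since subtracting a larger PSD matrix can only decrease the largest eigenvalue; and the bound $\epsilon(\tau) \le \|\bm{H}\|^2$ follows by taking $\tau = 0$ and using monotonicity, since $\epsilon(0) = \lambda_{\max}(\bm{H}^T\bm{H}) = \|\bm{H}\|^2$. Nonnegativity of $\epsilon(\tau)$ for the relevant range, if needed, is not automatic for large $\tau$ but is not claimed. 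I do not anticipate a serious obstacle here; this is essentially a bookkeeping argument translating between quadratic forms and eigenvalue inequalities, and the only point requiring a moment's care is confirming that the defining $\arg\min$ is attained so that the equality \eqref{eqn:epsilon_t} holds with $\epsilon(\tau)$ being an actual value rather than an infimum.
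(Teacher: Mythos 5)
Your proposal is correct and follows essentially the same route as the paper: both translate the quadratic-form inequality into positive semi-definiteness of $\tau\bm{L}^T\bm{L}+\epsilon\bm{I}-\bm{H}^T\bm{H}$ and then read off the minimal $\epsilon$ as $\lambda_{\max}(\bm{H}^T\bm{H}-\tau\bm{L}^T\bm{L})$. Your added remarks on attainment of the minimum and the monotonicity/upper-bound claims are fine but not needed for the lemma itself.
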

\begin{proof}
For a given $\tau$, and $\epsilon(\tau)$ as from \eqref{eq:t_eps_def}, it follows that for all nontrivial $\bm{x}$,
\begin{align*}
\|\bm{Hx}\|^2&\le \tau\|\bm{Lx}\|^2 + \epsilon(\tau)\|\bm{x}\|^2;\\
\frac{\tau\|\bm{Lx}\|^2 + \epsilon(\tau)\|\bm{x}\|^2 - \|\bm{Hx}\|^2}{\|\bm{x}\|^2} &\ge 0.
\end{align*}
By the definition in \eqref{eq:t_eps_def}, $\epsilon(\tau)$ is the smallest such number making this inequality hold for all $\bm{x}$, meaning that the inequality is equality for some $\bm{x}$, implying
\begin{align*}
\lambda_{\min}(\tau \bm{L}^T\bm{L} + \epsilon(\tau) \bm{I} - \bm{H}^T\bm{H}) &= 0;\\
\lambda_{\min}(\tau \bm{L}^T\bm{L}- \bm{H}^T\bm{H}) &= -\epsilon(\tau);\\
\lambda_{\max}(\bm{H}^T\bm{H} - \tau \bm{L}^T\bm{L}) &= \epsilon(\tau),
\end{align*}
completing the proof.
\end{proof}

We now identify the specific matrices $\bm{T}$ which we consider for our analysis of \eqref{eqn:core_assumption}.
\begin{lem}
\label{lem:one_space}
Let the SVD of $\bm{L}$ be given by
\begin{align*}
\bm{L} = \bm{U}\bm{\Sigma} \bm{V}^T.
\end{align*}
Let $\mc{V}_k$ be the linear subspace spanned by the first $k$ singular vectors in $\bm V$, i.e., the right singular vectors associated with the $k$ largest singular values, and let $\bm{V}_k$ be the matrix of those singular vectors. Let
\begin{align}
\nonumber \bm{P}_{\mc{V}_k} = \bm{V}_k\bm{V}_k^T;\\
\label{eq:tr_def}
\bm{T} \coloneqq \bm{HP}_{\mc{V}_k}\bm{L}^+,
\end{align}
where $\bm{P}_{\mc{S}}$ denotes the orthogonal projection matrix onto $\mc{S}$, and $^{+}$ denotes the Moore-Penrose pseudo-inverse. Then for $k\le \mbox{rank}(\bm{L})$,
\begin{align}
\label{eq:a-tb_def}
 \bm{H}-\bm{T}\bm{L}&= \bm{HP}_{\mc{V}_k^{\perp}}.
\end{align}
\end{lem}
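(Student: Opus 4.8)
The plan is to compute $\bm{H} - \bm{T}\bm{L}$ directly by substituting the definition $\bm{T} = \bm{H}\bm{P}_{\mathcal{V}_k}\bm{L}^+$ and simplifying using properties of the SVD and the pseudo-inverse. First I would write $\bm{H} - \bm{T}\bm{L} = \bm{H} - \bm{H}\bm{P}_{\mathcal{V}_k}\bm{L}^+\bm{L} = \bm{H}\bigl(\bm{I} - \bm{P}_{\mathcal{V}_k}\bm{L}^+\bm{L}\bigr)$, so the whole statement reduces to showing that $\bm{I} - \bm{P}_{\mathcal{V}_k}\bm{L}^+\bm{L} = \bm{P}_{\mathcal{V}_k^{\perp}}$, or equivalently $\bm{P}_{\mathcal{V}_k}\bm{L}^+\bm{L} = \bm{P}_{\mathcal{V}_k}$.

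The key computational step is identifying $\bm{L}^+\bm{L}$. From the SVD $\bm{L} = \bm{U}\bm{\Sigma}\bm{V}^T$ one has $\bm{L}^+ = \bm{V}\bm{\Sigma}^+\bm{U}^T$, and hence $\bm{L}^+\bm{L} = \bm{V}\bm{\Sigma}^+\bm{\Sigma}\bm{V}^T = \bm{V}_{\rho}\bm{V}_{\rho}^T = \bm{P}_{\mathcal{V}_{\rho}}$, where $\rho = \mathrm{rank}(\bm{L})$ and $\bm{V}_{\rho}$ collects the right singular vectors associated with the nonzero singular values; that is, $\bm{L}^+\bm{L}$ is the orthogonal projector onto the row space of $\bm{L}$, which is exactly $\mathcal{V}_{\rho} = \mathrm{span}$ of the first $\rho$ right singular vectors. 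Since by hypothesis $k \le \mathrm{rank}(\bm{L}) = \rho$, the subspace $\mathcal{V}_k$ is contained in $\mathcal{V}_{\rho}$, and the composition of two orthogonal projectors onto nested subspaces satisfies $\bm{P}_{\mathcal{V}_k}\bm{P}_{\mathcal{V}_{\rho}} = \bm{P}_{\mathcal{V}_k}$. Concretely, $\bm{P}_{\mathcal{V}_k}\bm{L}^+\bm{L} = (\bm{V}_k\bm{V}_k^T)(\bm{V}_{\rho}\bm{V}_{\rho}^T) = \bm{V}_k(\bm{V}_k^T\bm{V}_{\rho})\bm{V}_{\rho}^T = \bm{V}_k\bm{V}_k^T = \bm{P}_{\mathcal{V}_k}$, using that $\bm{V}_k^T\bm{V}_{\rho} = [\,\bm{I}_k\ |\ \bm{0}\,]$ by orthonormality of the singular vectors. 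Plugging this back gives $\bm{H} - \bm{T}\bm{L} = \bm{H}(\bm{I} - \bm{P}_{\mathcal{V}_k}) = \bm{H}\bm{P}_{\mathcal{V}_k^{\perp}}$, which is \eqref{eq:a-tb_def}.

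I do not expect a genuine obstacle here; the only point requiring care is the role of the hypothesis $k \le \mathrm{rank}(\bm{L})$. If $k$ exceeded the rank, then $\bm{V}_k$ would include right singular vectors associated with zero singular values, which lie in the null space of $\bm{L}^+\bm{L}$, and the identity $\bm{P}_{\mathcal{V}_k}\bm{L}^+\bm{L} = \bm{P}_{\mathcal{V}_k}$ would fail. The assumption guarantees $\mathcal{V}_k$ sits inside the row space of $\bm{L}$, which is precisely what makes the projector identity go through. This is also why the theorem statement elsewhere restricts the minimization to $k \le \mathrm{rank}(\bm{L})$ and sets $\sigma_{k+1} = 0$ at the boundary case $k = \mathrm{rank}(\bm{L})$.
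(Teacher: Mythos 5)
Your proof is correct and follows essentially the same route as the paper's: both reduce the claim to the identity $\bm{P}_{\mc{V}_k}\bm{L}^+\bm{L} = \bm{P}_{\mc{V}_k}$, using that $\bm{L}^+\bm{L}$ is the orthogonal projector onto the row space of $\bm{L}$ (the paper writes this as $\bm{P}_{\mc{N}(\bm{L})^{\perp}}$, you write it as $\bm{V}_\rho\bm{V}_\rho^T$) and that $k \le \mathrm{rank}(\bm{L})$ places $\mc{V}_k$ inside that row space. Your explicit SVD computation of the nested-projector identity is just a slightly more detailed rendering of the paper's one-line argument.
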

\begin{proof}
  Let $\mathcal{N}(\bs{L})$ denote the nullspace of $\bs{L}$.  Note that
\begin{align*}
\bm{L}^+\bm{L} = \bm{P}_{\mc{N}(\bm{L})^{\perp}}.
\end{align*}
Since $k\le\mbox{rank}(\bm{L})$, then $\mc{V}_k \subset \mc{N}(\bm{L})^{\perp}$. Then from \eqref{eq:tr_def}, it follows that
\begin{align*}
\bm{H}-\bm{T}\bm{L}&= \bm{H} -\bm{H}\bm{P}_{\mc{V}_k}\bm{L}^+\bm{L},\\
& = \bm{H}(\bm{I}-\bm{P}_{\mc{V}_k}\bm{P}_{\mc{N}(\bm{L})^{\perp}})\\
& = \bm{H}(\bm{I}-\bm{P}_{\mc{V}_k}),\\
&= \bm{HP}_{\mc{V}_k^{\perp}}.
\end{align*}
\end{proof}
This result allows us to bound the two key quantities, $\|\bm{E}\| = \|\bm{H}-\bm{T}\bm{L}\|$ and $\|\bm{T}\|$ in terms of $\epsilon(\tau)$.
\begin{lem}
\label{lem:</3</3</3}
Let $\tau$ and $\epsilon(\tau)$ satisfy \eqref{eq:t_eps_def}, and let $\bs{T}$ be as defined in \eqref{eq:tr_def}. Then, for $\sigma_{k}$, the $k$th largest singular value of $\bm{L}$,
\begin{align}
\label{eq:m3a-trb}
\|\bs{E} \|^2 = \|\bm{H}-\bm{T}\bm{L}\|^2 &\le \tau\sigma^2_{k+1} + \epsilon(\tau), \\
\label{eq:m3nt}\|\bm{T}\|^2 &\le \tau +\epsilon(\tau)\sigma_k^{-2}.
\end{align}
\end{lem}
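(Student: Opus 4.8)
The plan is to establish both bounds using the SVD of $\bm{L}$ together with the characterization of $\epsilon(\tau)$ from Lemma~\ref{lem:algorithm}, namely that $\bm{H}^T\bm{H} \preceq \tau\bm{L}^T\bm{L} + \epsilon(\tau)\bm{I}$ in the positive semi-definite ordering. First, for \eqref{eq:m3a-trb}: by Lemma~\ref{lem:one_space}, $\bm{H}-\bm{T}\bm{L} = \bm{H}\bm{P}_{\mc{V}_k^\perp}$, so $\|\bm{H}-\bm{T}\bm{L}\|^2 = \|\bm{H}\bm{P}_{\mc{V}_k^\perp}\|^2 = \max_{\|\bm{x}\|=1} \|\bm{H}\bm{P}_{\mc{V}_k^\perp}\bm{x}\|^2$. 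Since $\bm{P}_{\mc{V}_k^\perp}$ is an orthogonal projection it suffices to maximize over unit vectors $\bm{x} \in \mc{V}_k^\perp$. For such $\bm{x}$, apply the inequality $\|\bm{H}\bm{x}\|^2 \le \tau\|\bm{L}\bm{x}\|^2 + \epsilon(\tau)\|\bm{x}\|^2$ from \eqref{eq:t_eps_def}. The key observation is that for $\bm{x}$ in the span of the singular vectors of $\bm{L}$ with indices $> k$ (which contains $\mc{V}_k^\perp$ restricted to the row space, and the nullspace contributes nothing), we have $\|\bm{L}\bm{x}\| \le \sigma_{k+1}\|\bm{x}\|$; more carefully, writing $\bm{x} = \sum_{j>k} a_j \bm{v}_j$ (the nullspace components give $\bm{L}\bm{x}$-contribution zero), $\|\bm{L}\bm{x}\|^2 = \sum_{j>k}\sigma_j^2 a_j^2 \le \sigma_{k+1}^2\|\bm{x}\|^2$. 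Combining, $\|\bm{H}\bm{x}\|^2 \le \tau\sigma_{k+1}^2 + \epsilon(\tau)$ for unit $\bm{x}\in\mc{V}_k^\perp$, which gives \eqref{eq:m3a-trb}.

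For \eqref{eq:m3nt}: from \eqref{eq:tr_def}, $\bm{T} = \bm{H}\bm{P}_{\mc{V}_k}\bm{L}^+$. Using the SVD $\bm{L} = \bm{U}\bm{\Sigma}\bm{V}^T$, we have $\bm{L}^+ = \bm{V}\bm{\Sigma}^+\bm{U}^T$, and since $k \le \mathrm{rank}(\bm{L})$, $\bm{P}_{\mc{V}_k}\bm{L}^+ = \bm{V}_k\bm{V}_k^T\bm{V}\bm{\Sigma}^+\bm{U}^T = \bm{V}_k(\bm{\Sigma}_k)^{-1}\bm{U}_k^T$ where $\bm{\Sigma}_k = \mathrm{diag}(\sigma_1,\dots,\sigma_k)$ and $\bm{U}_k$ holds the first $k$ left singular vectors. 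For any unit vector $\bm{y}$, set $\bm{z} = (\bm{\Sigma}_k)^{-1}\bm{U}_k^T\bm{y}$ so $\bm{T}\bm{y} = \bm{H}\bm{V}_k\bm{z}$; writing $\bm{x} = \bm{V}_k\bm{z}\in\mc{V}_k$, apply \eqref{eq:t_eps_def} to get $\|\bm{T}\bm{y}\|^2 = \|\bm{H}\bm{x}\|^2 \le \tau\|\bm{L}\bm{x}\|^2 + \epsilon(\tau)\|\bm{x}\|^2$. Now $\bm{L}\bm{x} = \bm{U}\bm{\Sigma}\bm{V}^T\bm{V}_k\bm{z} = \bm{U}_k\bm{\Sigma}_k\bm{z} = \bm{U}_k\bm{U}_k^T\bm{y}$, so $\|\bm{L}\bm{x}\|^2 = \|\bm{U}_k^T\bm{y}\|^2 \le \|\bm{y}\|^2 = 1$. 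Also $\|\bm{x}\|^2 = \|\bm{z}\|^2 = \|(\bm{\Sigma}_k)^{-1}\bm{U}_k^T\bm{y}\|^2 \le \sigma_k^{-2}\|\bm{U}_k^T\bm{y}\|^2 \le \sigma_k^{-2}$ since the smallest diagonal entry of $\bm{\Sigma}_k$ is $\sigma_k$. Hence $\|\bm{T}\bm{y}\|^2 \le \tau + \epsilon(\tau)\sigma_k^{-2}$, and taking the supremum over unit $\bm{y}$ yields \eqref{eq:m3nt}.

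The main obstacle, and the point requiring care, is handling the pseudo-inverse and the projections correctly when $k < \mathrm{rank}(\bm{L})$ and $\bm{L}$ may itself have a nontrivial nullspace: one must track which components of an arbitrary input lie in $\mc{V}_k$, in $\mc{V}_k^\perp \cap \mathrm{row}(\bm{L})$, or in $\mathcal{N}(\bm{L})$, and verify that the restriction to these subspaces does not lose anything when computing the operator norms. Once the reduction to unit vectors in $\mc{V}_k$ (for $\bm{T}$) and $\mc{V}_k^\perp$ (for $\bm{E}$) is made cleanly, the estimates follow directly from the defining inequality of $\epsilon(\tau)$ and the singular-value bounds $\|\bm{L}\bm{x}\| \le \sigma_{k+1}\|\bm{x}\|$ on $\mc{V}_k^\perp$ and $\sigma_k$ as the smallest relevant singular value on $\mc{V}_k$. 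I expect the algebra itself to be short; the subtlety is purely in the bookkeeping of subspaces.
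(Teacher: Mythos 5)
Your proof is correct and follows essentially the same route as the paper: both bounds reduce, via Lemma~\ref{lem:one_space} and the structure of $\bm{T}$, to applying the defining inequality of $\epsilon(\tau)$ to unit vectors in $\mc{V}_k^\perp$ (where $\|\bm{L}\bm{x}\|\le\sigma_{k+1}\|\bm{x}\|$) and to vectors $\bm{x}\in\mc{V}_k$ with $\|\bm{L}\bm{x}\|\le 1$ and $\|\bm{x}\|\le\sigma_k^{-1}$. Your explicit SVD computation of $\bm{P}_{\mc{V}_k}\bm{L}^+=\bm{V}_k\bm{\Sigma}_k^{-1}\bm{U}_k^T$ is just a more concrete rendering of the paper's change-of-variables argument $\max_{\|\bm{x}\|\le 1}\|\bm{T}\bm{x}\|=\max_{\|\bm{L}\bm{y}\|\le 1,\,\bm{y}\in\mc{V}_k}\|\bm{H}\bm{y}\|$, not a different method.
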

\begin{proof}
For any $\bm{x}\in\mathbb{R}^N$ with $\|\bm{x}\| \le 1$,
\begin{align*}
\|(\bm{H}-\bm{T}\bm{L})\bm{x}\|^2 &= \|\bm{H}\bm{P}_{\mc{V}_r^\perp}\bm{x}\|^2,\\
& \le \tau\|\bm{LP}_{\mathcal{V}_k^{\perp}}\bm{x}\|^2 + \epsilon(\tau)\|\bm{P}_{\mc{V}_k^\perp}\bm{x}\|^2,\\
& \le \tau\sigma^2_{k+1} + \epsilon(\tau),
\end{align*}
which proves \eqref{eq:m3a-trb}. We have constructed $\bm{T}$ so that
\begin{align}
\label{eq:ufoundasecret}\bm{T} \bm{y} &= \bm{0}, &\bm{y}\in\mc{R}(\bm{L})^{\perp};\\
\label{eq:ucleverperson}\bm{T} \bm{Ly} &= \bm{0}, &\bm{y}\in\mc{V}_k^{\perp},
\end{align}
where $\mc{R}(\bm{L})$ is the range of $\bm L$. It follows that
\begin{align*}
\|\bm{T}\|^2 &= \mathop{\max}\limits_{\|\bm{x}\|\le 1}\|\bm{T}\bm{x}\|^2,\\
&\leftstackrel{\eqref{eq:ufoundasecret}}{=} \mathop{\max}\limits_{\|\bm{Ly}\|\le 1}\|\bm{T}\bm{Ly}\|^2,\\
&\leftstackrel{\eqref{eq:ucleverperson}}{=} \mathop{\max}\limits_{\|\bm{Ly}\|\le 1, \bm{y}\in\mc{V}_k}\|\bm{Hy}\|^2,\\
&\le \tau\|\bm{Ly}\|^2 + \epsilon(\tau)\|\bm{y}\|^2,
\end{align*}
for some $\bs{y}$ achieving the maximum, and the last inequality uses the definition of $\epsilon(\tau)$. Note that $\|\bm{Ly}\| \le 1$, and for $\bm{y}\in\mc{V}_k$, $\|\bm{y}\| \le \sigma_k^{-1}\|\bm{Ly}\|\le \sigma_k^{-1}$. Therefore,
\begin{align*}
\|\bm{T}\|^2 &\le \tau + \epsilon\sigma_k^{-2},
\end{align*}
which proves \eqref{eq:m3nt}.
\end{proof}

The proof of Theorem \ref{thm:algorithm} follows from the combination of the bound provided in (\ref{eq:the_one_bound_pre}) and the results of Lemma~\ref{lem:</3</3</3}. To explain the minimization of (\ref{eqn:main_bound}) over $k$, we note that the matrix $\bm T$ in (\ref{lem:one_space}) can be constructed for any rank $k$ not larger than the rank of the low-fidelity data matrix $\bm L$.

As a brief remark we note that (\ref{eqn:main_bound}) may instead be taken as
\begin{align}
\label{eq:m3err2}
\resizebox{0.9\textwidth}{!}{
$\|\bm{H}-\hat{\bm{H}}\| \le \mathop{\min}\limits_{\tau_1,\tau_2,k\le \mbox{rank}(\bm{L})}\left(1 + \|\bm{C}_L\|\right)\sqrt{\tau_1\sigma^2_{k+1} + \epsilon(\tau_1)} + \|\bm{L}-\hat{\bm{L}}\|\sqrt{\tau_2 + \epsilon(\tau_2)\sigma_k^{-2}},$
}
\end{align}
where the choices $(\tau_1,\epsilon_1(\tau_1))$ and $(\tau_2,\epsilon_2(\tau_2))$ both satisfy \eqref{eq:t_eps_def}. Choosing different values may produce more effective values depending on $\sigma_k$, $\sigma_{k+1}$, $\|\bm{C}_L\|$, and $\|\bm{L}-\hat{\bm{L}}\|$. For simplicity, we have restricted the results of this work to the optimization over a single point $(\tau, \epsilon (\tau))$.

\section*{References}

\bibliographystyle{elsarticle-num}
\bibliography{citations}
\end{document}